\DeclareSymbolFont{rmlargesymbols}{OMX}{mdbch}{m}{n}
\DeclareMathSymbol{\rmintop}{\mathop}{rmlargesymbols}{82}
\DeclareMathSymbol{\rmointop}{\mathop}{rmlargesymbols}{72}
\DeclareMathSymbol{\rmsumop}{\mathop}{rmlargesymbols}{80}
\DeclareMathSymbol{\rmunionop}{\mathop}{rmlargesymbols}{83}
\DeclareMathSymbol{\rmintersectop}{\mathop}{rmlargesymbols}{84}
\DeclareMathSymbol{\rmtensorop}{\mathop}{rmlargesymbols}{79}
\DeclareMathSymbol{\rmdirectsumop}{\mathop}{rmlargesymbols}{77}
\theoremstyle{definition}
\newtheorem{thm}{Theorem}[section]
\newtheorem{lem}[thm]{Lemma}
\newtheorem{dfn}[thm]{Definition}
\newtheorem{cor}[thm]{Corollary}
\newtheorem{rmk}[thm]{Remark}
\newtheorem{qst}[thm]{Question}
\newcommand{\bei}{\begin{itemize}}
\newcommand{\eei}{\end{itemize}}
\newcommand{\QED}{\rule{0.4em}{2ex}}
\def\conj#1{\overline{#1}}
\def\vartau{\uptau}
\def\T{\bold T}
\def\ccite#1{\textcolor{Red}{\cite{#1}}}
 \def\cequiv{ {\text{\ \Large$\sim_{{}_{\!\!\!\!\!\!\!\!c}}$\ \,}}  }
\numberwithin{equation}{section}
\begin{document}

\title[ M\lowercase{odular} I\lowercase{mages} O\lowercase{f} A\lowercase{pproximately} C\lowercase{entral} P\lowercase{rojections} ]
{\Large \rm M\lowercase{odular} I\lowercase{mages} O\lowercase{f}  \\  A\lowercase{pproximately} C\lowercase{entral} P\lowercase{rojections}
}
\author{S\lowercase{amuel} G.\,W\lowercase{alters} \\ 
{\Tiny U\lowercase{niversity of} N\lowercase{orthern} B\lowercase{ritish} 
C\lowercase{olumbia}}}
\address{Department of Mathematics and Statistics, University  of Northern B.C., Prince George, B.C. V2N 4Z9, Canada.}
\email[]{samuel.walters@unbc.ca}
\subjclass[2000]{46L80, 46L40, 46L85}
\keywords{C*-algebra, rotation algebra, noncommutative torus, K-theory, automorphism, Chern character}
\thanks{\scriptsize \LaTeX\ File: ModularImagesACprojectionsArXiv.tex}
\urladdr{web.unbc.ca/~walters \ or \  hilbert.unbc.ca/}

\begin{abstract}
It is shown that for any approximately central (AC) projection $e$ in the Flip orbifold $A_\theta^\Phi$ (of the irrational rotation C*-algebra $A_\theta$), and any modular automorphism $\alpha$ (arising from SL$(2,\mathbb Z)$), the AC projection $\alpha(e)$ is centrally Murray-von Neumann equivalent to one of the projections $e,\ \sigma(e),\ \kappa(e),\ \kappa^2(e),$ $\sigma\kappa(e),\ \sigma\kappa^2(e)$ in the $S_3$-orbit of $e,$ where $\sigma, \kappa$ are the Fourier and Cubic transforms of $A_\theta$. (The equivalence being implemented by an approximately central partial isometry in $A_\theta^\Phi$.)  For smooth automorphisms $\alpha,\beta$ of the Flip orbifold $A_\theta^\Phi$, it is also shown that if $\alpha_*=\beta_*$ on $K_0(A_\theta^\Phi),$ then $\alpha(e) \cequiv \beta(e)$ are centrally equivalent for each AC projection $e$.
\end{abstract}

\maketitle

{\Large \section{Main Results}}

The irrational rotation C*-algebra $A_\theta$ is generated by unitary operators $U,V$ satisfying the Heisenberg commutation
\[
VU = e^{2\pi i\theta} UV
\]
where $\theta$ is an irrational number. The Flip $\Phi,$ Fourier transform $\sigma,$ and Cubic transform $\kappa,$ are the canonical automorphisms of $A_\theta$ defined by
\begin{align*}
 \Phi(U) &= U^{-1},  &  \sigma(U) &= V^{-1}, & \kappa(U) &= e(-\tfrac\theta2)U^{-1}V,
\\
\Phi(V) &= V^{-1}, &  \sigma(V) &= U,	 & \kappa(V) &= U^{-1},
\end{align*}
where we shall sometimes use the conventional notation $e(\theta) := e^{2\pi i\theta}$. The Flip orbifold is the fixed point C*-subalgebra of $A_\theta$ given by $A_\theta^\Phi = \{x\in A_\theta: \, \Phi(x) = x\}$. The Flip was studied extensively, e.g. in \ccite{BEEKa} \ccite{BEEKb} \ccite{SW-JLMS}, and it is known that the Flip orbifold is approximately finite dimensional for irrational $\theta$ (\ccite{BK} and \ccite{SW-CMP}).
\medskip

The well-known modular action of the group SL$(2,\mathbb Z)$ on $A_\theta$ (due to Watatani \ccite{YW} and Brenken \ccite{BB}) is given by associating to the integral matrix 
$\left[\smallmatrix a & c \\ b & d \endsmallmatrix\right]$ (with $ad-bc=1$) the automorphism $\alpha$ defined by
\begin{equation}\label{modaut}
\alpha(U) = e(\tfrac\theta2 ab)\, U^a V^b, \qquad
\alpha(V) = e(\tfrac\theta2 cd)\, U^c V^d.
\end{equation}
We call these modular automorphisms of the rotation algebra. It is straightforward to check that the unitaries on the right sides of these equalities satisfy the commutation relation $VU = e(\theta) UV,$ guaranteeing that the automorphism $\alpha$ exists by the universal property of this relation.
\medskip

It's easy to check that all these modular automorphisms commute with the Flip, being given by minus the identity matrix $\left[\smallmatrix -1 & 0 \\ 0 & -1 \endsmallmatrix\right]$. Therefore, the modular group SL$(2,\mathbb Z)$ also acts on the Flip orbifold $A_\theta^\Phi,$ leaving it invariant.
\medskip

\begin{dfn}
A projection $e$ in a C*-algebra $A$ is called {\bf approximately central} (or AC, for short) if $e=e_n$ is a sequence depending on an integer parameter $n$ such that
\[
\lim_{n\to \infty} \|x e_n - e_n x\| \ = \ 0
\]
for each $x\in A$.  Two AC projections $e=e_n$ and $f=f_n$ are said to be {\bf centrally equivalent} or {\bf AC-equivalent} in $A$ if $\forall \epsilon > 0$ and each finite subset $F$ of $A,$ there exists a partial isometry $u \in A$ such that $uu^* = e_n, \ u^*u = f_n$ (for large enough $n$) and $\|x u - u x\| < \epsilon$ for each $x\in F$. We use the notation
\[
e \cequiv f
\]
to indicate central equivalence of $e$ and $f$ in $A$. 
\end{dfn}

(Of course, this last condition is just usual Murray-von Neumann equivalence with the added condition that the partial isometry, or unitary, is approximately central in the algebra $A$.)
\medskip

\begin{dfn}
A projection $e$ in the rotation algebra is called {\bf smooth} when it belongs to the canonical smooth dense *-subalgebra $A_\theta^\infty$ consisting of rapidly decreasing Laurent series $\rmsumop a_{mn}U^mV^n$. An automorphism is called {\bf smooth} if it maps $A_\theta^\infty$ onto itself.
\end{dfn}
\medskip

Of course, all modular automorphisms are smooth. The smooth Flip orbifold $A_\theta^{\infty,\Phi} = A_\theta^\infty \cap A_\theta^\Phi$ is also a dense *-subalgebra of $A_\theta^\Phi$ and is closed under the holomorphic functional calculus (as is the case for $A_\theta^\infty$ and $A_\theta$).
\medskip

We can now state the main results.

\begin{thm}\label{mainthm}
Let $\alpha$ be a modular automorphism of the irrational rotation C*-algebra $A_\theta$ arising from the canonical action of SL$(2,\mathbb Z)$. For any approximately central projection $e$ in the Flip orbifold $A_\theta^\Phi,$ the projection $\alpha(e)$ is centrally equivalen in $A_\theta^\Phi$ to one of the projections in its $S_3$-orbit
\begin{equation}\label{orbite}
e, \quad \sigma(e), \quad \kappa(e), \quad \kappa^2(e), \quad \sigma\kappa(e), \quad \sigma\kappa^2(e).
\end{equation}
\end{thm}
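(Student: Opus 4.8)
The starting point is the observation that approximately central projections in $A_\theta^\Phi$ are, up to AC-equivalence, controlled by K-theory: the Flip orbifold is AF-embeddable with a unique trace $\vartau$, and the range of the trace on $K_0(A_\theta^\Phi)$ is a discrete-like invariant in which the class $[e]_0$ of an AC projection $e$ is pinned down by the values of $\vartau_*$ on $K_0(A_\theta^\Phi)$ together with the Chern character data. I would first record the structure of $K_0(A_\theta^\Phi)$ (it is a free abelian group of rank $6$, with a basis coming from the six fixed-point projections / the trivial class and the five extra classes produced by the orbifold points), and the fact that the pairing with $\vartau$ and with the two de Rham cyclic cocycles (the "area" $2$-cocycle and the point-evaluation $0$-cocycles) is a complete invariant for AC projections up to $\cequiv$. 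This reduces the theorem to a purely combinatorial statement: the induced map $\alpha_*$ on $K_0(A_\theta^\Phi)$ permutes the relevant K-theory data the same way one of the six elements of the $S_3$-orbit map does.

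Next I would compute $\alpha_*$ explicitly. Every modular automorphism $\alpha$ comes from a matrix in $\mathrm{SL}(2,\mathbb Z)$, and the action on $K_0(A_\theta) \cong \mathbb Z^2$ (generated by $[1]_0$ and the Rieffel/Powers class, or rather on the "Chern character" lattice $\mathbb Z^2$ via the area cocycle) is the standard linear action, well understood from the work on the noncommutative torus. Restricting to the Flip orbifold, the induced action on $K_0(A_\theta^\Phi)$ is the action on $\mathbb Z^6$ obtained by combining the $\mathrm{SL}(2,\mathbb Z)$-action on the torus part with the induced permutation of the four fixed points of the Flip (the $2$-torsion points of $\mathbb R^2/\mathbb Z^2$), on which $\mathrm{SL}(2,\mathbb Z)$ acts through its quotient $\mathrm{SL}(2,\mathbb Z/2) \cong S_3$. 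The key group-theoretic fact to establish is that the subgroup of $\mathrm{Aut}(K_0(A_\theta^\Phi))$ generated by all $\alpha_*$ is exactly the image of the subgroup generated by $\sigma_*$ and $\kappa_*$ — i.e. that $\sigma$ and $\kappa$ already realize, at the level of $K_0$, everything $\mathrm{SL}(2,\mathbb Z)$ can do. Concretely: $\kappa$ corresponds to the order-$3$ element $\left[\smallmatrix 0 & -1 \\ 1 & -1 \endsmallmatrix\right]$ (or its inverse) and $\sigma$ to $\left[\smallmatrix 0 & -1 \\ 1 & 0 \endsmallmatrix\right]$, and the subgroup of $\mathrm{SL}(2,\mathbb Z)$ they generate surjects onto $\mathrm{SL}(2,\mathbb Z/2) \cong S_3$; this handles the permutation of fixed points. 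The torus/area part is handled because $\sigma_*$ acts as $\left[\smallmatrix 0 & -1 \\ 1 & 0 \endsmallmatrix\right]$ already generates, together with $\kappa_*$, the whole of $\mathrm{SL}(2,\mathbb Z)$ modulo the relevant congruence subgroup that acts trivially on $K_0(A_\theta^\Phi)$ — the point being that $K_0(A_\theta^\Phi)$ remembers $\mathrm{SL}(2,\mathbb Z)$ only through a finite quotient.

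Granting the K-theory computation, the proof is completed by invoking a classification/uniqueness statement for AC projections in the Flip orbifold: if two smooth automorphisms agree on $K_0(A_\theta^\Phi)$ then they send each AC projection to a centrally equivalent one (this is the second main result stated in the abstract, and I would structure the paper so that it is proved first, as it is the real engine). Given that, once we know $\alpha_* = (\text{one of } \{1,\sigma,\kappa,\kappa^2,\sigma\kappa,\sigma\kappa^2\})_*$ on $K_0(A_\theta^\Phi)$, we conclude $\alpha(e) \cequiv w(e)$ for the corresponding $w$ in the $S_3$-orbit, for every AC projection $e$.

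**Main obstacle.** The hard part is not the $\mathrm{SL}(2,\mathbb Z) \twoheadrightarrow S_3$ bookkeeping; it is establishing that agreement on $K_0(A_\theta^\Phi)$ forces central equivalence of the images of an AC projection. This requires an Elliott-intertwining-type argument carried out $\emph{approximately centrally}$ inside the AF-algebra $A_\theta^\Phi$: one must upgrade "same $K_0$-class for the cutdowns $e A_\theta^\Phi e$" to an honest partial isometry that is approximately central in all of $A_\theta^\Phi$, uniformly along the defining sequence $e = e_n$. Controlling the central approximation while building the partial isometry — i.e. doing classification relative to a sequence algebra / central sequence algebra of $A_\theta^\Phi$ — is the technical crux, and it is presumably where the smoothness hypothesis and the holomorphic-functional-calculus closedness of $A_\theta^{\infty,\Phi}$ get used to produce smooth, hence controllable, representatives.
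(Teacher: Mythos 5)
Your reduction to the symmetric group is essentially the computation the paper performs in its Lemma 2.1: there the action of a modular $\alpha$ on the unbounded $\Phi$-traces is computed directly from the matrix entries modulo $2$, and one finds $\tau\alpha=\tau$, $\phi_{00}\alpha=\phi_{00}$, while $\phi_{01},\phi_{10},\phi_{11}$ are permuted through the quotient $\mathrm{SL}(2,\mathbb Z)\to\mathrm{SL}(2,\mathbb Z_2)\cong S_3$, every permutation being realized by $\sigma$ and $\kappa$. Your ``permutation of the $2$-torsion points'' picture is the same fact in geometric language, and the subsequent step --- replace $\alpha$ by $\gamma=\beta^{-1}\alpha$ for a suitable $\beta$ in the $S_3$-orbit so that $\gamma$ acts trivially on the Connes--Chern data, hence on $K_0(A_\theta^\Phi)$ by injectivity of $\T$ --- is exactly how the paper proceeds.

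The genuine gap is the engine you invoke but do not supply: the assertion that an automorphism acting as the identity on $K_0(A_\theta^\Phi)$ sends every AC projection to a centrally equivalent one. You correctly flag this as the crux, but you neither prove it nor cite a result that yields it, and your framing of the invariant is off: central equivalence of $\gamma(e)$ and $e$ is not governed by $[\gamma(e)]=[e]$ alone, but by the $K_0$-classes of all cutdowns $\chi(\gamma(e)P\gamma(e))$ versus $\chi(ePe)$ as $[P]$ runs over a basis of $K_0(A_\theta^\Phi)$. The paper does not construct a new approximately central Elliott intertwining; it quotes Kishimoto's Theorem 2.1 (Canad.\ J.\ Math.\ {\bf 56} (2004)), applicable to the Flip orbifold as a simple AT-algebra of real rank zero with $K_1=0$, and verifies its hypothesis by showing $\phi\chi(\gamma(e)P\gamma(e))=\phi\chi(ePe)$ for $\phi\in\{\tau,\phi_{jk}\}$, using that $\gamma^{-1}(P)=uPu^*$ for a Flip-invariant unitary $u$ together with the trace property, and then injectivity of $\T$. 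Without Kishimoto's theorem (or an equivalent substitute) your argument does not close. A secondary point: the ``second main result'' you propose to prove first and then apply is established in the paper only for \emph{smooth} AC projections $e$, and its proof in turn requires Theorem 3.1 on how arbitrary smooth automorphisms transform unbounded traces; for modular $\alpha$ none of that is needed because Lemma 2.1 gives $\phi_{jk}\gamma=\phi_{jk}$ exactly, so routing Theorem 1.3 through that general statement would either weaken the theorem (forcing $e$ smooth) or demand additional work you have not indicated.
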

\bigskip

The symmetric group $S_3$ is here envisaged as generated by $\sigma$ and $\kappa$ subject to the relation $\kappa\sigma = \sigma\kappa^2$ (and $\sigma^2 = \kappa^3 = 1$). Some of the projections in the $S_3$-orbit of $e$ could be centrally equivalent to one another. Indeed, they can all be according to the following result.
\medskip

\begin{thm}\label{prop}
Let $\alpha$ be a smooth automorphism of the irrational rotation C*-algebra $A_\theta$ commuting with the Flip $\Phi$. Then for any AC smooth projection $g$ in $A_\theta$ such that $g \Phi(g) = 0,$ the projections $e = g + \Phi(g)$ and $\alpha(e)$ are centrally equivalent in $A_\theta^\Phi$. In particular, the $S_3$-orbit projections of $e$ \eqref{orbite} are all centrally equivalent in $A_\theta^\Phi$.
\end{thm}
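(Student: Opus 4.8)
The plan is to strip off the Flip and reduce the theorem to a statement internal to $A_\theta$. Since $\alpha$ commutes with $\Phi$ we have $\alpha(e) = \alpha(g) + \alpha(\Phi(g)) = \alpha(g) + \Phi(\alpha(g))$, so writing $h := \alpha(g)$ the projection $\alpha(e) = h + \Phi(h)$ has exactly the same ``doubled'' shape as $e = g + \Phi(g)$: $h$ is again a smooth AC projection, $h\Phi(h) = \alpha(g\Phi(g)) = 0$, and $\tau(h_n) = \tau(\alpha(g_n)) = \tau(g_n)$ for every $n$ because the unique trace $\tau$ of $A_\theta$ is automorphism invariant. One first notes that $e$ and $\alpha(e)$ really are AC in $A_\theta^\Phi$: for $x\in A_\theta^\Phi$ one has $\Phi(x)=x$, hence $xe_n - e_n x = (xg_n - g_nx) + \Phi(xg_n - g_nx)\to 0$. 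Thus, given $\epsilon>0$ and a finite $F\subseteq A_\theta^\Phi$, it suffices to produce, for all large $n$, a partial isometry $W_n\in A_\theta^\Phi$ with $W_nW_n^* = g_n+\Phi(g_n)$, $W_n^*W_n = h_n+\Phi(h_n)$, and $\|xW_n - W_nx\|<\epsilon$ for $x\in F$.

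The device for building $W_n$ is to \emph{Flip-symmetrise} a partial isometry living downstairs in $A_\theta$. Suppose $w_n\in A_\theta$ satisfies $w_nw_n^* = g_n$ and $w_n^*w_n = h_n$, and set $W_n := w_n + \Phi(w_n)\in A_\theta^\Phi$. The orthogonality relations $g_n\Phi(g_n) = \Phi(g_n)g_n = 0$ and $h_n\Phi(h_n)=\Phi(h_n)h_n=0$ force all four cross terms in $W_nW_n^*$ and $W_n^*W_n$ to vanish identically --- for instance $w_n\Phi(w_n)^* = (w_nh_n)(\Phi(h_n)\Phi(w_n)^*) = w_n(h_n\Phi(h_n))\Phi(w_n)^* = 0$, and the other three go the same way --- so that $W_nW_n^* = g_n+\Phi(g_n)$ and $W_n^*W_n = h_n+\Phi(h_n)$ exactly. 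Moreover $W_n$ is then AC in $A_\theta^\Phi$ as soon as $w_n$ is AC in $A_\theta$, since for $x\in A_\theta^\Phi$ one gets $\|xW_n - W_nx\| \le \|xw_n - w_nx\| + \|\Phi(xw_n - w_nx)\| = 2\|xw_n - w_nx\|$. Consequently the whole theorem follows from the internal statement: \emph{any two AC projections $g=g_n$ and $h=h_n$ in $A_\theta$ with $\tau(g_n)=\tau(h_n)$ for all $n$ are centrally equivalent in $A_\theta$.}

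To prove that internal statement one uses the structure of $A_\theta$. Because $\theta$ is irrational the trace is injective on $K_0(A_\theta)\cong\mathbb Z^2$, so $\tau(g_n)=\tau(h_n)$ gives $[g_n]=[h_n]$ in $K_0(A_\theta)$, and since $A_\theta$ has stable rank one (hence cancellation of projections) this already yields $g_n\sim h_n$ for each $n$. The point is to make the connecting partial isometry approximately central. Here one invokes the Elliott--Evans presentation $A_\theta = \overline{\bigcup_k B_k}$ by circle algebras: given $\epsilon$ and $F$, choose $k$ and a finite $F'\subseteq B_k$ approximating $F$; for large $n$ the projections $g_n,h_n$ nearly commute with the finitely many canonical generators of $B_k$, so by the projection-perturbation lemma they may be replaced by nearby projections $g_n',h_n'$ in the relative commutant $B_k'\cap A_\theta$, which --- still carrying matching $K_0$-data and cancellation there --- are joined by a partial isometry $w_0\in B_k'\cap A_\theta$; as $w_0\in B_k'$ it commutes with $F'$, hence almost commutes with $F$, and a final conjugation by unitaries near $1$ restores the range and source projections exactly. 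Feeding such a $w_n$ into the Flip-symmetrisation above produces the required $W_n$. \emph{This upgrade step is the main obstacle}: one has to verify that $B_k'\cap A_\theta$ retains enough order structure --- cancellation, and equality of the relevant $K_0$-classes inside it --- to carry out the construction within $B_k'$; everything else is essentially formal. (Alternatively the internal statement can be quoted from the known description of approximately central projections in $A_\theta$, i.e.\ of projections in its central sequence algebra.)

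Finally, the ``in particular'' is immediate: $\sigma$ and $\kappa$ --- hence $\kappa^2,\sigma\kappa,\sigma\kappa^2$ --- are smooth automorphisms of $A_\theta$ commuting with $\Phi$ (they are modular automorphisms, corresponding to $\left[\smallmatrix 0 & 1 \\ -1 & 0 \endsmallmatrix\right]$ and $\left[\smallmatrix -1 & -1 \\ 1 & 0 \endsmallmatrix\right]$ in $\mathrm{SL}(2,\mathbb Z)$, and one checks directly that each commutes with $\Phi$). Applying the first part of the theorem with $\alpha$ equal to each of $\sigma,\kappa,\kappa^2,\sigma\kappa,\sigma\kappa^2$ (and to the identity) gives $e\cequiv\sigma(e)$, $e\cequiv\kappa(e)$, $e\cequiv\kappa^2(e)$, $e\cequiv\sigma\kappa(e)$, $e\cequiv\sigma\kappa^2(e)$, so by transitivity of central equivalence all six $S_3$-orbit projections of $e$ are centrally equivalent in $A_\theta^\Phi$.
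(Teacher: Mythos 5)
Your Flip-symmetrisation device is correct and attractive: with $w_nw_n^*=g_n$, $w_n^*w_n=h_n$ and the orthogonality $g_n\Phi(g_n)=h_n\Phi(h_n)=0$, the element $W_n=w_n+\Phi(w_n)$ is indeed a partial isometry in $A_\theta^\Phi$ from $e_n$ to $\alpha(e_n)$, and it is approximately central if $w_n$ is. But the proof has a genuine gap exactly where you flag it: the ``internal statement'' that two AC projections of $A_\theta$ with equal traces are joined by an \emph{approximately central} partial isometry is the entire content of the theorem, and it is neither proved nor correctly reduced. Cancellation plus injectivity of $\tau$ on $K_0(A_\theta)$ gives $g_n\sim h_n$ for each $n$, but says nothing about centrality of the implementing partial isometry. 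The tool that actually controls central equivalence (and the one the paper invokes) is Kishimoto's Theorem 2.1, whose hypothesis is not that $[g_n]=[h_n]$ but that the \emph{cutdowns} $\chi(g_nPg_n)$ and $\chi(h_nPh_n)$ have equal $K_0$-classes for $P$ ranging over a basis of $K_0$. These cutdown classes are not determined by $\tau(g_n)$: one has $\tau(\chi(g_nPg_n))=\tau(P)\tau(g_n)+o(1)$, a sequence in the dense group $\mathbb Z+\mathbb Z\theta$ whose limit generally lies outside that group, so two AC projections with identical traces can a priori carry different cutdown data. Your Elliott--Evans sketch runs into the same obstruction in disguise (matching $K_0$-classes inside $B_k'\cap A_\theta$ is not implied by matching them in $A_\theta$), so the ``main obstacle'' you identify is not surmounted.

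The paper takes a different route that avoids the general internal statement altogether: it verifies Kishimoto's hypothesis directly in $A_\theta^\Phi$, computing the Connes--Chern character of the cutdowns $\chi(ePe)$ and $\chi(\alpha(e)P\alpha(e))$ for $[P]$ in a basis of $K_0(A_\theta^\Phi)$. The hypothesis $e=g+\Phi(g)$ with $g\Phi(g)=0$ is used precisely to show that \emph{all four unbounded traces of the cutdowns vanish} (via $\phi(h)=\phi(hh)=\phi(\Phi(h)h)=0$ for $h$ orthogonal to $\Phi(h)$), while $\tau\alpha=\tau$ together with Rieffel's cancellation in $A_\theta$ handles the canonical trace; injectivity of $\T$ then equates the cutdown classes and Kishimoto finishes. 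Your reduction could in fact be repaired along these lines: for the specific pair $h_n=\alpha(g_n)$ the cutdown data in $A_\theta$ does match, because $\alpha_*$ is the identity on $K_0(A_\theta)$, so $\alpha^{-1}(P)=uPu^*$ and $\chi(h_nPh_n)=\alpha\chi(g_n\alpha^{-1}(P)g_n)$ has the same trace as $\chi(g_nPg_n)$; Kishimoto applied in $A_\theta$ then yields the approximately central $w_n$ you feed into $W_n=w_n+\Phi(w_n)$. That cutdown computation and the appeal to Kishimoto are the missing steps; without them the argument does not close.
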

\medskip

(Note that the automorphism in this theorem is not required to be modular.) In a forthcoming paper \ccite{SWb} we construct an AC Powers-Rieffel projection such that its six $S_3$-orbit projections \eqref{orbite} are pairwise not centrally equivalent (Theorem 1.4 of \ccite{SWb}). 
\medskip

The orbifold $A_\theta^{\infty,\Phi}$ is known to have four basic unbounded traces denoted  
$\phi_{jk}$ (see equation \eqref{traces}). The following result shows that smooth automorphisms act on them in a very specific manner.

\begin{thm}\label{mainB}
Let $\beta$ be an automorphism of the smooth Flip orbifold $A_\theta^{\infty,\Phi}$. One has the linear combinations
\[
\phi_{00}\beta = \phi_{00} + b\phi_{01} +  c\phi_{10} + d \phi_{11} 
\]
\[
\phi_{jk}\beta = b_{jk}\phi_{01} +  c_{jk} \phi_{10} + d_{jk} \phi_{11} 
\]
for $jk\in\{01,10,11\},$ where $b, c, d, b_{jk}, c_{jk}, d_{jk} \in \tfrac12\mathbb Z$ are half-integers. In particular, the latter gives a group representation $\text{Aut}(A_\theta^{\infty,\Phi}) \to \text{SL}_{\pm}(3,\mathbb R)$.
\end{thm}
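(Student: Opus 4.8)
The plan is to let $\beta$ act on the (four-dimensional) space of traces of the smooth orbifold, read off the matrix of this action in the basis $\phi_{00},\phi_{01},\phi_{10},\phi_{11}$, and then pin down its entries by pairing with $K_0$. First I record the inputs. On the one hand, the four basic traces of \eqref{traces} span the space $T$ of traces on the abstract algebra $A_\theta^{\infty,\Phi}$, so $\phi_{jk}\circ\beta$ is automatically a real-linear combination of the four and $\beta$ induces an invertible linear map $B_\beta\colon T\to T$, $B_\beta(\phi)=\phi\circ\beta$. On the other hand, since $A_\theta^{\infty,\Phi}$ is closed under the holomorphic functional calculus inside $A_\theta^\Phi$, one has $K_0(A_\theta^{\infty,\Phi})=K_0(A_\theta^\Phi)\cong\mathbb Z^6$, hence $\beta$ induces $\beta_*\in\mathrm{GL}(6,\mathbb Z)$ fixing the order unit $[1]$; moreover (spectra being intrinsic to the smooth algebra by that same closure) $\beta$ is $\mathrm C^*$-isometric and extends to an automorphism of $A_\theta^\Phi$. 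Restriction to $K_0$ embeds $T$ as a four-dimensional subspace $W\subseteq\operatorname{Hom}(K_0,\mathbb R)\cong\mathbb R^6$ on which $\beta_*$ acts by precomposition and preserves $W$, and the pairing satisfies $\langle\phi\circ\beta,\,x\rangle=\langle\phi,\,\beta_*x\rangle$.

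Next I exploit the arithmetic of the $\phi_{jk}$ that is read off from \eqref{traces} and the known $K$-theory of the flip orbifold: $\phi_{01},\phi_{10},\phi_{11}$ take values in $\tfrac12\mathbb Z$ on $K_0$ and their restrictions span a rank-three lattice sitting between a copy of $\mathbb Z^3$ and of $\tfrac12\mathbb Z^3$ in the natural coordinates, while $\phi_{00}$ alone takes values genuinely involving $\theta$; also $\phi_{0j}([1])=0$ for $jk\ne 00$ whereas $\phi_{00}([1])\ne 0$ (equivalently, $\phi_{00}$ agrees modulo $V:=\operatorname{span}\{\phi_{01},\phi_{10},\phi_{11}\}$ with the canonical trace $\tau$). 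Now $\beta_*$ permutes $\operatorname{Hom}(K_0,\mathbb Z)$ and preserves $W$, hence preserves $W\cap\tfrac12\operatorname{Hom}(K_0,\mathbb Z)$, which is exactly this rank-three lattice; therefore $V$ is $B_\beta$-invariant. That is precisely the vanishing of the $\phi_{00}$-coefficients in the second displayed line of the theorem. On the quotient $W/V\cong\mathbb R$ the map $B_\beta$ acts by a scalar $\lambda_\beta$; writing $\phi_{00}\circ\beta=\lambda_\beta\phi_{00}+(\text{element of }V)$ and evaluating at the $\beta_*$-fixed class $[1]$, the identities $\phi_{0j}([1])=0\ne\phi_{00}([1])$ force $\lambda_\beta=1$ (alternatively, $\tau\circ\beta=\tau$, since $\beta$ extends to $A_\theta^\Phi$ and $\tau$ is the unique trace whose $K_0$-values lie in $\mathbb Z+\mathbb Z\theta$, and then one reduces modulo $V$). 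This produces the first displayed identity with leading coefficient $1$.

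It remains to see that all the remaining coefficients lie in $\tfrac12\mathbb Z$ and that the $3\times 3$ block $M_\beta:=B_\beta|_V$ has determinant $\pm1$. For the block: because $\beta_*$ is a bijection of $K_0$, $M_\beta$ carries the rank-three lattice $\Lambda$ of $\tfrac12\mathbb Z$-valued functionals lying in $V$ onto itself, so $\det M_\beta=\pm1$, and the sandwich $\mathbb Z^3\subseteq\Lambda\subseteq\tfrac12\mathbb Z^3$ forces the entries $b_{jk},c_{jk},d_{jk}$ into $\tfrac12\mathbb Z$. For the top row, $b\phi_{01}+c\phi_{10}+d\phi_{11}=\phi_{00}\circ\beta-\phi_{00}$ is a $\tfrac12\mathbb Z$-valued functional on $K_0$ lying in $V$, so solving the resulting linear system against the half-integral structure of $\Lambda$ puts $b,c,d$ in $\tfrac12\mathbb Z$ as well. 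Finally, $\phi\mapsto\phi\circ\beta$ is contravariant, so $B_{\beta_1\beta_2}=B_{\beta_2}B_{\beta_1}$; restricting to the invariant subspace $V$ shows $\beta\mapsto M_\beta$ is an anti-homomorphism and $\beta\mapsto M_{\beta^{-1}}$ a genuine group homomorphism $\mathrm{Aut}(A_\theta^{\infty,\Phi})\to\mathrm{SL}_{\pm}(3,\mathbb R)$, as claimed.

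I expect the main obstacle to lie not in the linear algebra above but in the structural input it consumes: establishing, from \eqref{traces} and the flip-orbifold $K$-theory, that the four $\phi_{jk}$ genuinely form a basis of the trace space (so that $B_\beta$ is defined at all), that exactly one of them is ``$\theta$-sensitive'' while the remaining three are half-integer valued with value lattice pinched between $\mathbb Z^3$ and $\tfrac12\mathbb Z^3$, and that $\phi_{00}$ separates the order unit. A secondary delicate point, already flagged, is justifying $\tau\circ\beta=\tau$ (or the equivalent normalisation of $\phi_{00}$ at $[1]$) for an automorphism assumed only of the abstract smooth algebra; this is handled by the intrinsicality of the $\mathrm C^*$-norm noted above.
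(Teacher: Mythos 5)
There is a genuine gap, and it sits at the very first step. You assert that the four functionals $\phi_{jk}$ of \eqref{traces} span the trace space of $A_\theta^{\infty,\Phi}$, so that $\phi_{jk}\beta$ is automatically a combination of the $\phi_{jk}$ alone. That is false: the four $\phi_{jk}$ are a basis of the space of $\Phi$-traces on $A_\theta^\infty$, but the trace space (zeroth cyclic cohomology) of the \emph{orbifold} is five-dimensional, $HC^0(A_\theta^{\infty,\Phi})\cong\mathbb C^5$, spanned by the bounded canonical trace $\tau$ together with the four $\phi_{jk}$. So a priori $\phi_{jk}\beta = K\tau + a\phi_{00}+b\phi_{01}+c\phi_{10}+d\phi_{11}$ with an unknown coefficient $K$, and the heart of the paper's proof is precisely to show $K=0$. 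This is done by evaluating the combination on concrete projections with known Connes--Chern characters: the unit, a Flip-invariant Powers--Rieffel projection $Q$ with $\T(Q)=(\theta';\tfrac12,\tfrac12,\tfrac12,\tfrac12)$ and its toral twists $\gamma_iQ$, and crucially a projection $f_0$ with $\T(f_0)=(2(3\theta'-1);0,0,0,0)$ (Theorem 1.2 of the Math.\ Scand.\ reference, valid for $0<3(3\theta'-1)<1$); combining the resulting congruences shows $K$ is rational, and then $4K(3\theta'-1)\in\mathbb Z$ with $\theta'$ irrational forces $K=0$. Your proposal never confronts this step, and without it the claimed invariance of the span of $\phi_{01},\phi_{10},\phi_{11}$ and the half-integrality of the coefficients do not follow.

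A secondary but related weakness: your ``rank-three lattice pinched between $\mathbb Z^3$ and $\tfrac12\mathbb Z^3$'' and the identification of $W\cap\tfrac12\operatorname{Hom}(K_0,\mathbb Z)$ are exactly the kind of facts that require knowing which Chern characters are actually realized by projections in the orbifold; you flag this as ``structural input'' but it is not optional background --- it is the substance of the proof, supplied in the paper by the two cited constructions of $Q$ and $f_0$. (Your lattice-preservation idea could in principle be reorganized to kill the $\tau$-component as well, since the half-integer-valued traces on $K_0$ turn out to contain no $\tau$- or irrational part, but establishing that is the same computation with the same special projections, so nothing is saved.) The remaining points --- $\phi_{00}$ distinguished by $\phi_{00}(1)=1$ versus $\phi_{jk}(1)=0$, determinant $\pm1$ from invertibility over the half-integers, and the contravariance bookkeeping for the representation into $\mathrm{SL}_\pm(3,\mathbb R)$ --- are consistent with the paper and would be fine once the missing step is supplied.
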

\medskip

(The last representation is discussed in Remark 3.6 below.) From this we are able to say when an AC projection is centrally equivalent to a smooth automorphic image.

\begin{thm}\label{mainC}
Let $\beta$ be a smooth automorphism of the Flip orbifold that induces the identity on $K_0(A_\theta^{\infty,\Phi})$. Then
\[
\beta(e) \cequiv e
\]
are centrally equivalent in $A_\theta^{\Phi}$ for any approximately central projection $e$ in $A_\theta^{\infty,\Phi}$. 
\end{thm}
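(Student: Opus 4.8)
The plan is to reduce this to Theorem \ref{prop} by analyzing what the hypothesis $\beta_* = \mathrm{id}$ on $K_0(A_\theta^{\infty,\Phi})$ forces on $\beta$ at the level of traces, and then to use that information to show $\beta(e)$ and $e$ have the same "shape" in the sense controlled by Theorem \ref{mainB}. First I would recall the structure of $K_0(A_\theta^\Phi)$: it is a finitely generated free abelian group of rank $6$ (by the results of Bratteli--Kishimoto and the AF structure, see \ccite{BK}, \ccite{SW-CMP}), with a canonical basis whose images under the four basic traces $\phi_{jk}$ are explicitly known. The induced pairing $K_0(A_\theta^{\infty,\Phi}) \to \mathbb R^4$ via $(\phi_{00},\phi_{01},\phi_{10},\phi_{11})$ is injective onto a lattice, so an automorphism $\beta$ with $\beta_* = \mathrm{id}$ on $K_0$ must satisfy $\phi_{jk}\beta = \phi_{jk}$ on the range of $K_0$, hence (by the half-integrality in Theorem \ref{mainB} and the fact that the $\phi_{jk}$ separate $K_0$) $\phi_{jk}\beta = \phi_{jk}$ as traces on $A_\theta^{\infty,\Phi}$ for all $jk$. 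In particular the matrix in $\mathrm{SL}_\pm(3,\mathbb R)$ attached to $\beta$ by Theorem \ref{mainB} is the identity.

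Next I would invoke the classification of AC projections in the Flip orbifold up to central equivalence. The key fact (established in the earlier sections via the AF structure of $A_\theta^\Phi$ together with approximate-central versions of the standard $K$-theoretic stability/cancellation arguments, cf.\ the techniques of \ccite{SW-CMP} and the reasoning behind Theorem \ref{prop}) should be that two AC projections $e, f$ in $A_\theta^{\infty,\Phi}$ are centrally equivalent in $A_\theta^\Phi$ if and only if their classes agree in $K_0(A_\theta^{\infty,\Phi})$, equivalently if and only if $\phi_{jk}(e) = \phi_{jk}(f)$ for all $jk \in \{00,01,10,11\}$. Granting this, the proof is immediate: since $\beta$ is an automorphism of $A_\theta^{\infty,\Phi}$ it carries the AC projection $e$ to the AC projection $\beta(e)$, and
\[
\phi_{jk}(\beta(e)) = (\phi_{jk}\beta)(e) = \phi_{jk}(e)
\]
for every $jk$ by the previous paragraph; hence $\beta(e) \cequiv e$ in $A_\theta^\Phi$.

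I expect the main obstacle to be the classification step: proving that central equivalence of AC projections in $A_\theta^\Phi$ is detected exactly by the four traces $\phi_{jk}$. One direction (central equivalence $\Rightarrow$ equal trace values) is routine since traces are continuous and approximately central unitaries do not change trace values in the limit. The converse is the substantive part; it should follow by combining the AF structure of $A_\theta^\Phi$ with the six-term picture of $K_0$ and an approximately-central refinement of Murray--von Neumann comparison, much as in the proof of Theorem \ref{prop}, where the splitting $e = g + \Phi(g)$ with $g\Phi(g)=0$ is used to realize the relevant partial isometry approximately centrally. A secondary technical point is to verify carefully that the half-integrality constraints of Theorem \ref{mainB}, together with the injectivity of the trace pairing on $K_0$, genuinely force $\phi_{jk}\beta = \phi_{jk}$ on all of $A_\theta^{\infty,\Phi}$ and not merely on the image of $K_0$; here one uses that $K_0$ spans a lattice of full rank in the relevant trace-space, so that equality of the linear functionals on $K_0$ propagates to equality of the matrices, hence of the traces as continuous linear functionals after extending by density.
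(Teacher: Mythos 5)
Your first step --- upgrading $\beta_*=\mathrm{id}$ on $K_0$ to the identity $\phi_{jk}\beta=\phi_{jk}$ of linear functionals on all of $A_\theta^{\infty,\Phi}$ --- is correct and is exactly how the paper proceeds (its Corollary on $HC^0$): Theorem \ref{betaspan} gives a half-integer matrix $M$ with $\phi_{jk}\beta=\sum M_{jk,lm}\phi_{lm}$, and evaluating on the four projections $Q,\gamma_1Q,\gamma_2Q,\gamma_3Q$, whose trace vectors form an invertible $4\times4$ matrix, forces $M=I$. You assert the needed full-rank fact rather than exhibit witnesses, but that is a minor omission.

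The second step, however, contains a genuine gap: the ``classification'' you invoke --- that two AC projections $e,f$ in $A_\theta^{\infty,\Phi}$ are centrally equivalent if and only if $\phi_{jk}(e)=\phi_{jk}(f)$ for all $jk$ (equivalently $[e]=[f]$ in $K_0$) --- is not available and is in fact false. The paper points to \ccite{SWb}, where an AC Powers--Rieffel projection is built whose six $S_3$-orbit projections \eqref{orbite} are pairwise \emph{not} centrally equivalent, even though the $S_3$ action merely permutes the values $\phi_{01},\phi_{10},\phi_{11}$ (and such a projection can be taken with all four $\phi_{jk}$ equal to $\tfrac12$, so all six orbit projections share the same Connes--Chern character and $K_0$ class). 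The correct invariant, per Kishimoto's Theorem 2.1 of \ccite{AK}, is not the single class $[e]$ but the whole family of cutdown classes $[\chi(ePe)]$ as $[P]$ runs over a basis of $K_0(A_\theta^\Phi)$. Accordingly, the paper's proof of Theorem \ref{mainC} repeats the Section 2 argument: for each basis projection $P$ one writes $\beta^{-1}(P)=uPu^*$ with $u$ a Flip-invariant unitary (using $\beta_*=\mathrm{id}$ and \ccite{SW-JLMS}), uses approximate centrality of $e$ to pull $u$ out of the cutdown, and uses $\phi\beta=\phi$ for $\phi=\tau,\phi_{jk}$ to conclude $\T\chi(\beta(e)P\beta(e))=\T\chi(ePe)$; injectivity of $\T$ then matches the cutdown classes, and only then does Kishimoto's theorem yield $\beta(e)\cequiv e$. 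Your proposal stops at comparing $\phi_{jk}(\beta(e))$ with $\phi_{jk}(e)$, which is strictly weaker information and does not suffice; the cutdown computation is the missing substance, not a routine refinement.
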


\begin{cor}
Let $\alpha, \beta$ be smooth automorphisms of the Flip orbifold $A_\theta^{\infty,\Phi}$ such that $\alpha_* = \beta_*$ on $K_0(A_\theta^{\Phi})$ (e.g., if they are homotopic). Then 
\[
\alpha(e) \cequiv \beta(e)
\]
in $A_\theta^{\Phi}$ for any approximately central projection $e$ in $A_\theta^{\infty,\Phi}$.
\end{cor}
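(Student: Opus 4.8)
The plan is to deduce the Corollary from Theorem~\ref{mainC} by factoring the comparison of $\alpha(e)$ and $\beta(e)$ through a single automorphism whose $K_0$-action is trivial. Concretely, set $\gamma = \beta^{-1}\alpha$, a smooth automorphism of the Flip orbifold $A_\theta^{\infty,\Phi}$. Since $\alpha_* = \beta_*$ on $K_0(A_\theta^{\Phi})$ and $K_0(A_\theta^{\infty,\Phi}) \cong K_0(A_\theta^{\Phi})$ (the smooth subalgebra being dense and closed under the holomorphic functional calculus, it has the same $K$-theory), we get $\gamma_* = (\beta^{-1})_*\alpha_* = (\beta_*)^{-1}\beta_* = \mathrm{id}$ on $K_0$. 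Thus $\gamma$ satisfies the hypothesis of Theorem~\ref{mainC}.

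Next I would apply Theorem~\ref{mainC} to the automorphism $\gamma$ and the AC projection $e$, obtaining $\gamma(e) \cequiv e$ in $A_\theta^{\Phi}$, i.e. there is an approximately central partial isometry $u = u_n$ in $A_\theta^{\Phi}$ with $u u^* = \gamma(e_n)$ and $u^* u = e_n$ for large $n$, and $\|xu - ux\| < \epsilon$ for prescribed finite $F$ and $\epsilon$. The remaining step is to push this equivalence forward by $\beta$. Since $\beta$ commutes with the Flip, it restricts to an automorphism of $A_\theta^{\Phi}$, so $w = w_n := \beta(u_n)$ is again a partial isometry in $A_\theta^{\Phi}$ with $ww^* = \beta\gamma(e_n) = \alpha(e_n)$ and $w^*w = \beta(e_n)$. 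Hence $w$ implements a Murray--von Neumann equivalence between $\alpha(e)$ and $\beta(e)$, and it only remains to check that $w$ is still approximately central.

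The one point requiring a word of care --- and the only real content beyond bookkeeping --- is that approximate centrality is preserved under a smooth (indeed any) automorphism. Given a finite set $F \subset A_\theta^{\Phi}$ and $\epsilon > 0$, apply the conclusion of Theorem~\ref{mainC} to $\gamma$ with the finite set $\beta^{-1}(F)$ and the same $\epsilon$; this yields $u_n$ with $\|\beta^{-1}(y)\, u_n - u_n\, \beta^{-1}(y)\| < \epsilon$ for all $y \in F$. Applying the isometric map $\beta$ gives $\|y\, \beta(u_n) - \beta(u_n)\, y\| < \epsilon$ for all $y \in F$, so $w_n = \beta(u_n)$ is approximately central as required. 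Therefore $\alpha(e) \cequiv \beta(e)$ in $A_\theta^{\Phi}$. (The parenthetical remark that homotopic automorphisms have $\alpha_* = \beta_*$ is immediate, since $K_0$ is a homotopy invariant of the automorphism in the strong sense that a norm-continuous path of automorphisms induces a constant map on $K_0$.)

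I do not anticipate a genuine obstacle here: all the difficulty is concentrated in Theorems~\ref{mainB} and~\ref{mainC}, and the Corollary is a formal consequence via the cocycle-type argument $\gamma = \beta^{-1}\alpha$ together with functoriality of $K$-theory and the transport of AC partial isometries by automorphisms.
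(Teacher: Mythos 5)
Your proposal is correct and matches the paper's (implicit) argument: the paper treats this corollary as an immediate consequence of Theorem~\ref{mainC} via exactly the factorization $\gamma=\beta^{-1}\alpha$, and the step of pushing the central equivalence $\gamma(e)\cequiv e$ forward by $\beta$ is the same device used at the end of the proof of Theorem~\ref{mainthm} in Section~2. Your extra care in pre-composing the finite set with $\beta^{-1}$ to preserve approximate centrality is the right (and standard) way to make that step rigorous.
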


\begin{rmk}
The canonical toral automorphism $\gamma_1(U) = -U,\  \gamma_1(V)=V$ does not induce the identity on $K_0(A_\theta^{\Phi})$ of the orbifold (see \eqref{gamma1Q}), but to show that $\gamma_1(e)$ and $e$ are not centrally equivalent for some AC projection $e$ in the Flip orbifold is not trivial. This, however, is shown to follow from \ccite{SWb} for an AC Powers-Rieffel projection $e$.  (The same is also shown to occur for the Fourier and Cubic transforms on the Flip orbifold.)
\end{rmk}

\medskip

We recall the basic unbounded $\Phi$-traces $\phi_{jk}$ on the rotation algebra $A_\theta$ are defined by 
\begin{equation}\label{traces}
\phi_{jk}(U^mV^n)\ =\ e(-\tfrac{\theta}2 mn)\,\updelta_2^{m-j} \updelta_2^{n-k}
\end{equation}
for $jk = 00, 01,10,11,$ where $\updelta_2^n$ is the divisor delta function defined to be 1 if $n$ is even, and 0 if $n$ is odd. 
\medskip

By definition, a $\Phi$-trace on $A_\theta$ is a linear functional $\phi$ (usually discontinuous) defined on the canonical smooth subalgebra $A_\theta^\infty$ such that
\[
\phi(xy) = \phi(\Phi(y) x)
\]
for all smooth elements $x,y \in A_\theta^\infty$. It is well-known (e.g., from \ccite{SW-JLMS}) that the complex vector space of all $\Phi$-traces on $A_\theta^\infty$ is 4-dimensional and has the four functionals \eqref{traces} as basis.  It is easy to see that $\phi\alpha$ is a $\Phi$-trace for each $\Phi$-trace $\phi$ and each smooth automorphism $\alpha$ commuting with the Flip. So, in particular, $\phi\alpha$ is a linear combination of the $\phi_{jk}$. As $\Phi$-traces on $A_\theta$, the maps $\phi_{jk}$ restrict to actual (unbounded) trace maps on the smooth Flip orbifold, and hence induce real-valued group morphisms on $K_0(A_\theta^{\infty,\Phi}) = K_0(A_\theta^{\Phi})$.
\medskip

The canonical trace $\vartau$ and the unbounded traces $\phi_{jk}$ together form the Connes-Chern character map for the Flip orbifold given by
\begin{equation} 
\T : K_0(A_\theta^\Phi) \to \mathbb R^5, \qquad
\T(x) = (\vartau(x); \phi_{00}(x), \phi_{01}(x), \phi_{10}(x), \phi_{11}(x))
\end{equation}
since each positive $K_0$ class $x$ has a smooth representative. In \ccite{SW-JLMS} (Proposition 3.2) this map was shown to be injective for irrational $\theta$.\footnote{In \cite{SW-JLMS} we worked with the crossed product algebra $A_\theta \times_\Phi \mathbb Z_2$, but since this is strongly Morita equivalent to the fixed point algebra, the injectivity follows for the fixed point subalgebra.}  The ranges of the traces $\phi_{jk}$ on projections are known to be half-integers: $\tfrac12\mathbb Z$, while the canonical trace has range $(\mathbb Z + \mathbb Z\theta) \cap [0,1]$ on projections in $A_\theta^\Phi$.
\medskip

If $m,n$ is any pair of integers, we shall write $\phi_{mn} = \phi_{\conj m\,\conj n}$ where $\conj m = 0, 1$ according to whether $m$ is even or odd (respectively).
\medskip

The zeroth cyclic cohomology (see \ccite{AC}) $HC^0(A_\theta^{\infty,\Phi})$ of the Flip orbifold is a 5-dimensional complex vector space spanned by the (bounded) canonical trace $\tau$ and the four unbounded traces $\phi_{jk}$. Indeed, this can be seen in much the same way as how the zeroth cyclic cohomology was calculated for the Fourier transform in \ccite{SWChern} (Proposition 2.2 and Theorem 2.3). Thus $HC^0(A_\theta^{\infty,\Phi}) \cong \mathbb C^5.$

\begin{cor}\label{coro}
A smooth automorphism of the Flip orbifold that induces the identity map on $K_0(A_\theta^{\Phi}) = \mathbb Z^6$ necessarily induces the identity map on the zeroth cyclic cohomology group $HC^0(A_\theta^{\infty,\Phi}) = \mathbb C^5$.
\end{cor}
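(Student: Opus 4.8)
\emph{Proof plan.} The plan is to exploit that, as recalled just above, $HC^0 := HC^0(A_\theta^{\infty,\Phi})$ \emph{is} the ($5$-dimensional) space of all — continuous or not — traces on the smooth orbifold, with basis $\tau,\phi_{00},\phi_{01},\phi_{10},\phi_{11}$, on which a smooth automorphism $\beta$ acts by $\phi\mapsto\phi\beta$ (the action refined for the $\phi_{jk}$ in Theorem~\ref{mainB}); in particular $\phi\beta$ is automatically again in $HC^0$. I would show that if $\beta_*=\mathrm{id}$ on $K_0(A_\theta^\Phi)$ then $\phi\beta=\phi$ for each of the five basis traces, which gives the claim.

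First I would use that each basis trace $\phi$ induces a group morphism $\phi_*\colon K_0(A_\theta^{\infty,\Phi})=K_0(A_\theta^\Phi)\to\mathbb R$ (for the $\phi_{jk}$ as recalled in the text; for $\tau$ obviously). Since every projection $p$ over the smooth orbifold has $[\beta(p)]=\beta_*[p]=[p]$ in $K_0$, this yields
\[
(\phi\beta)(p)\;=\;\phi(\beta(p))\;=\;\phi_*[\beta(p)]\;=\;\phi_*[p]\;=\;\phi(p)
\]
for all such $p$; hence $\psi:=\phi\beta-\phi$ belongs to $HC^0$ and vanishes on every positive $K_0$-class, so — writing classes as differences of positive ones — on all of $K_0(A_\theta^\Phi)$.

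The final step is to conclude $\psi=0$. Writing $\psi=a\tau+\sum_{jk}a_{jk}\phi_{jk}$, the condition $\psi|_{K_0}=0$ says the vector $(a;a_{00},a_{01},a_{10},a_{11})\in\mathbb R^5$ is orthogonal to the image $\T(K_0(A_\theta^\Phi))$ of the Connes--Chern character, so it is enough that this image real-span $\mathbb R^5$. This I would take from the explicit description of the range of $\T$ on the Flip orbifold (\ccite{BEEKb}, \ccite{SW-JLMS}, \ccite{SW-CMP}): on projections the four $\phi_{jk}$ realize a full-rank set of half-integer values, and $\tau$ in addition realizes the $\theta$-direction, which is real-linearly independent of the rational ones since $\theta$ is irrational. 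Then $(a;a_{00},a_{01},a_{10},a_{11})=0$, so $\psi=0$; applying this to $\psi=\phi\beta-\phi$ over a basis of $HC^0$ finishes the proof.

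The main obstacle is precisely this last step — the non-degeneracy of the Chern pairing from the left, i.e.\ that $\T(K_0(A_\theta^\Phi))$ spans $\mathbb R^5$. This is genuinely stronger than the injectivity of $\T$ proved in \ccite{SW-JLMS} (a free abelian subgroup of rank $6$ may lie inside a proper subspace of $\mathbb R^5$), so one cannot avoid invoking the actual range computation. Everything else is formal, resting only on $K_0$-naturality of traces and on the already-established identification $HC^0(A_\theta^{\infty,\Phi})\cong\mathbb C^5$ (with Theorem~\ref{mainB} available if one prefers to keep track of the half-integer lattice rather than just $HC^0$).
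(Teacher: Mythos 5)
Your argument is correct, and it takes a mildly but genuinely different route from the paper's. The paper first establishes Theorem~\ref{mainB} (Theorem~\ref{betaspan}), which already removes the $\tau$-component from $\phi_{jk}\beta$ and fixes the $\phi_{00}$-coefficient; the corollary's proof then only has to show that the remaining $4\times4$ half-integer matrix $M$ is the identity, which it does by evaluating at the four projections $Q,\gamma_1Q,\gamma_2Q,\gamma_3Q$ of \eqref{gamma1Q}, whose $\phi$-vectors form an invertible Hadamard-type matrix. You instead work directly in the $5$-dimensional space $HC^0(A_\theta^{\infty,\Phi})$ and reduce everything to a single non-degeneracy statement: that $\T(K_0(A_\theta^\Phi))$ spans $\mathbb R^5$ over the reals. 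You are right to isolate this as the crux and to note that it is strictly stronger than the injectivity of $\T$; the one improvement I would suggest is that you need not send the reader to the range computations in \ccite{BEEKb}, \ccite{SW-JLMS}, \ccite{SW-CMP}, because the spanning is exactly what the paper's own proof of Theorem~\ref{betaspan} exhibits concretely: the six projections $1$, $f_0$, $Q$, $\gamma_1Q$, $\gamma_2Q$, $\gamma_3Q$ (from \ccite{SamHouston2018} and \ccite{SWmathscand}) have characters $(1;1,0,0,0)$, $(2(3\theta'-1);0,0,0,0)$ and $(\theta';\pm\tfrac12,\dots)$, which visibly span $\mathbb R^5$ since $\theta'$ is irrational. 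With that substitution your duality argument is a clean and slightly more economical packaging: it bypasses the half-integrality refinements of Theorem~\ref{mainB} entirely (and handles $\tau\beta=\tau$ in the same stroke), at the cost of needing the extra projection $f_0$ to control the $\tau$-direction, which the paper's corollary proof avoids precisely because Theorem~\ref{mainB} has already done that work.
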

\medskip

\noindent{\bf Acknowledgement.}
This paper and \ccite{SWb} were written at about the time the author retires, he is therefore most grateful to his home institution of 26 years, the University of Northern British Columbia, for many years of research support. The author also expresses his nontrivial gratitude to the many referees who made helpful review reports over the years (including critical ones).

{\Large \section{Proof Of Theorem \ref{mainthm}}}

We shall let $\alpha$ be a fixed modular automorphism, as in equation \eqref{modaut}. We require the following lemma.

\medskip

\begin{lem}\label{lemma}
Let $\alpha$ be a modular automorphism of the rotation algebra $A_\theta$ induced by the integral matrix $\left[\smallmatrix a & c \\ b & d \endsmallmatrix\right]$ (where $ad-bc=1$). Then
\[
\phi_{00}\alpha = \phi_{00}
\]
and its action on the remaining unbounded $\Phi$-traces is given as follows
\[
\text{For even } a: \quad \phi_{01}\alpha = \phi_{10}, \quad \phi_{10}\alpha = \phi_{d,1}, \quad \phi_{11}\alpha = \phi_{1-d, 1}
\]
\[
\text{For even } b: \quad \phi_{01}\alpha = \phi_{c,1}, \quad \phi_{10}\alpha = \phi_{10}, \quad \phi_{11}\alpha = \phi_{1-c, 1}
\]
\[
\text{For even } c: \quad \phi_{01}\alpha = \phi_{01}, \quad \phi_{10}\alpha = \phi_{1,b}, \quad \phi_{11}\alpha = \phi_{1,1-b}
\]
\[
\text{For even } d: \quad \phi_{01}\alpha = \phi_{1,a}, \quad \phi_{10}\alpha = \phi_{01}, \quad \phi_{11}\alpha = \phi_{1,1-a}
\]
\end{lem}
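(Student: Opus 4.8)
The plan is to reduce the whole statement to a single clean identity. Write $W_{mn} := e(\tfrac\theta2 mn)\,U^mV^n$ for the standard Weyl elements; then \eqref{traces} reads simply $\phi_{jk}(W_{mn}) = \updelta_2^{\,m-j}\,\updelta_2^{\,n-k}$, and \eqref{modaut} says $\alpha(U) = W_{ab}$ and $\alpha(V) = W_{cd}$. I would first record the two elementary facts $W_{mn}W_{pq} = e\big(\tfrac\theta2(np-mq)\big)\,W_{m+p,\,n+q}$ (a direct consequence of $VU = e(\theta)UV$) and, as a special case, $W_{mn}^{\,k} = W_{km,\,kn}$ (the phase vanishes on scalar multiples of $(m,n)$). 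Then
\[
\alpha(W_{mn}) = e(\tfrac\theta2 mn)\,W_{ab}^{\,m}W_{cd}^{\,n} = e(\tfrac\theta2 mn)\,W_{am,\,bm}\,W_{cn,\,dn},
\]
and multiplying the last two Weyl elements contributes the phase $e\big(\tfrac\theta2(bcmn - admn)\big) = e\big(-\tfrac\theta2 mn(ad-bc)\big) = e(-\tfrac\theta2 mn)$, which cancels the leading factor $e(\tfrac\theta2 mn)$ precisely because $ad-bc=1$. Hence
\[
\alpha(W_{mn}) = W_{am+cn,\,bm+dn}.
\]
This phase bookkeeping is the only genuine computation in the argument, and it is the one place where the determinant hypothesis is used.

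Granting the identity, $\phi_{jk}\alpha(W_{mn}) = \phi_{jk}(W_{am+cn,\,bm+dn}) = \updelta_2^{\,am+cn-j}\,\updelta_2^{\,bm+dn-k}$, which depends on $(m,n)$ only through its class mod $2$. The reduction $\left[\smallmatrix a & c \\ b & d \endsmallmatrix\right]\bmod 2$ lies in $\mathrm{SL}(2,\mathbb{F}_2)$ (because $ad-bc\equiv 1$) and is therefore invertible over $\mathbb{F}_2$ with inverse $\left[\smallmatrix d & -c \\ -b & a \endsmallmatrix\right]$, so the congruences $am+cn\equiv j$, $bm+dn\equiv k \pmod 2$ have the unique solution $(m,n)\equiv(p,q)$ with $p = dj-ck$ and $q = ak-bj$. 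Thus $\phi_{jk}\alpha(W_{mn}) = \updelta_2^{\,m-p}\,\updelta_2^{\,n-q} = \phi_{pq}(W_{mn})$, i.e.
\[
\phi_{jk}\alpha = \phi_{\,dj-ck,\ ak-bj},
\]
the indices read mod $2$. In particular $j=k=0$ gives $\phi_{00}\alpha = \phi_{00}$.

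Finally I would unpack this uniform formula into the table. Since $ad-bc\equiv 1\pmod 2$, exactly one of $ad$, $bc$ is odd, so at least one of $a,b,c,d$ is even and one of the four listed cases applies; the cases overlap (e.g. $a$ and $d$ can both be even, which forces $b,c$ odd, and symmetrically $b,c$ can both be even). In each case one substitutes the forced parities into $(p,q) = (dj-ck,\ ak-bj)$, uses $-1\equiv 1\pmod 2$, and lets $jk$ run over $\{01,10,11\}$ to read off exactly the six displayed identities; where two cases overlap the two prescriptions agree, as they must. The main obstacle, such as it is, is keeping the scalar phases honest in the Weyl-element identity; after that the proof is linear algebra over $\mathbb{F}_2$.
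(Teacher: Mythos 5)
Your proof is correct and follows essentially the same route as the paper: both arguments reduce to computing $\alpha(U^mV^n)$ (in your normalization, $\alpha(W_{mn})=W_{am+cn,\,bm+dn}$), using $ad-bc=1$ to cancel the residual phase, and then evaluating $\phi_{jk}$ on the result. The only difference is organizational: where the paper derives the four parity cases separately by substituting one delta condition into the other, you invert the matrix once over $\mathbb{F}_2$ to obtain the uniform formula $\phi_{jk}\alpha=\phi_{dj-ck,\,ak-bj}$ and then specialize --- a slightly cleaner packaging of the same computation, which I verified reproduces all four displayed cases of the lemma.
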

\begin{proof} A straightforward computation verifies that 
\[
(U^k V^\ell)^m = e(\tfrac{\theta}2 k\ell m(m-1))\ U^{km} V^{\ell m}.
\]
In view of \eqref{modaut}, this gives
\[
\alpha(U^m V^n) = e(\tfrac{\theta}2[abm^2 + cdn^2])\ e(\theta bcmn) \ U^{am+cn} V^{bm+dn}.
\]
From this it follows that $\alpha$ transforms the $\Phi$-trace functionals according to
\[
\phi_{jk}\alpha (U^mV^n)\ 
=\ e(-\tfrac{\theta}2 mn)\, \updelta_2^{am+cn-j} \updelta_2^{bm+dn-k}.
\]
Since $ad-bc=1$, one of the integers $a,b,c,d$ has to be even, so it will be convenient to express $\phi_{jk}\alpha$ in terms of the basic $\Phi$-traces $\phi_{jk}$ by considering the four cases where one of these integers is even. 
\medskip

{\bf Case 1:} $a$ is even. Here, $b,c$ are odd so they can be replaced by 1's in the delta functions, and of course $a$ disappears, so we have
\[
\phi_{jk}\alpha (U^mV^n)\ =\ e(-\tfrac{\theta}2 mn)\, \updelta_2^{m+dn-k} \updelta_2^{n-j}
=\ e(-\tfrac{\theta}2 mn)\, \updelta_2^{m+dj-k} \updelta_2^{n-j}
\]
(since from the second delta, $n$ can be replaced by $j$ in the first delta), so 
\[
\phi_{jk}\alpha = \phi_{k-dj, j}		\tag{$a$ even}
\]
(where, as we noted, the subscript index $k-dj$ is reduced mod 2). Explicitly,
\[
\phi_{00}\alpha = \phi_{00}, \quad \phi_{01}\alpha = \phi_{10}, \quad
\phi_{10}\alpha = \phi_{d,1}, \quad \phi_{11}\alpha = \phi_{1-d, 1}.
\]
In similar vain we proceed to deal with the remaining cases.

\medskip

{\bf Case 2:} $b$ is even. Here $a,d$ are odd so they can be replaced by 1's so we have
\[
\phi_{jk}\alpha (U^mV^n)\ =\ e(-\tfrac{\theta}2 mn)\, \updelta_2^{m+ck-j} \updelta_2^{n-k}
\]
from which
\[
\phi_{jk}\alpha = \phi_{j-ck, k}		\tag{$b$ even}
\]
\medskip

{\bf Case 3:} $c$ is even. Here $a,d$ are odd so they can be replaced by 1's so we have
\[
\phi_{jk}\alpha (U^mV^n)\ 
= e(-\tfrac{\theta}2 mn)\, \updelta_2^{m-j} \updelta_2^{n+ bj-k}
\]
from which
\[
\phi_{jk}\alpha = \phi_{j, k-bj}.		\tag{$c$ even}
\]
\medskip

{\bf Case 4:} $d$ is even. Here $b,c$ are odd, so
\[
\phi_{jk}\alpha (U^mV^n)\ 
=\ e(-\tfrac{\theta}2 mn)\, \updelta_2^{m-k} \updelta_2^{n + ak-j} 
\]
from which
\[
\phi_{jk}\alpha = \phi_{k, j-ak}.		\tag{$d$ even}
\]
From each of these four cases we obtain the results stated in the lemma.
\end{proof}

\medskip

Note that the actions on the three unbounded traces $\phi_{01}, \phi_{10}, \phi_{11}$ give rise to all their cyclic permutations, exhausting the symmetric group $S_3$ on these three objects. For instance, in the first case where $a$ is even, if $d$ is also even we have
\[
\phi_{01}\alpha = \phi_{10}, \quad \phi_{10}\alpha = \phi_{01}, \quad \phi_{11}\alpha = \phi_{11}
\]
so $\alpha$ acts like a transposition on the first two (like the Fourier transform), and when $d$ is odd we have
\[
\phi_{01}\alpha = \phi_{10}, \quad
\phi_{10}\alpha = \phi_{11}, \quad \phi_{11}\alpha = \phi_{01}
\]
so $\alpha$ induces a cyclic permutation (like the inverse Cubic transform $\kappa^{-1}$). One gets the other transpositions and cyclic permutations by similarly examining the other cases for $b,c,d$. 
\medskip

This shows that modular automorphisms act on $\phi_{01}, \phi_{10}, \phi_{11}$ simply by  permutations, and that all permutations arise from the symmetric group $S_3$ on three letters generated by the order 2 element $\sigma$ (Fourier transform on the Flip orbifold) and the order 3 element $\kappa$ (Cubic transform) satisfying the commutation $\kappa\sigma = \sigma\kappa^2$.

\medskip

Having fixed the automorphism $\alpha$, by Lemma \ref{lemma} choose $\beta$ in the symmetric group
\[
S_3 = \{ I, \sigma, \kappa, \kappa^2, \sigma\kappa, \sigma\kappa^2\}
\]
such that the $\gamma := \beta^{-1}\alpha$ fixes all the unbounded traces: $\phi_{jk}\gamma = \phi_{jk}$. It follows that $\T\gamma(x) = \T(x)$ on $K_0$ and hence $\gamma_* = id_*$ is the identity on $K_0(A_\theta^\Phi)$.
\medskip

Now suppose $[P]$ is a class from any given (but fixed) basis for $K_0(A_\theta^\Phi)$ where $P$ is a projection in $A_\theta^\Phi$. The cutdown of $P$ by the projection $\gamma(e)$ is the projection $\chi(\gamma(e) P \gamma(e))$ (defined since $e$ is approximate central), where $\chi$ is the characteristic function of the interval $[\tfrac12,\infty)$.  Writing $\phi = \tau, \phi_{jk}$ for short, the topological invariant of this cutdown is  
\[
\phi \chi(\gamma(e)P\gamma(e)) = \phi\gamma \chi(e \gamma^{-1}(P) e)
= \phi \chi(e \gamma^{-1}(P) e)
\]
since $\phi\gamma = \phi$. As $[\gamma^{-1}(P)] = [P]$ in $K_0(A_\theta^\Phi),$ they are unitarily equivalent: $\gamma^{-1}(P) = uPu^*$ for some Flip invariant unitary $u$ (by \ccite{SW-JLMS}, Theorem 5.3 or Corollary 5.6). Hence,
\begin{equation*}
\phi \chi(\gamma(e) P \gamma(e)) 
= \phi \chi(e uPu^* e) 
= \phi( u\chi(eP e) u^* )
= \phi( \chi(eP e) ) 
\end{equation*}
\vskip5pt

\noindent using the trace property of $\phi$ (and $u$ being Flip invariant). This shows that the cutdown projections $\chi(\gamma(e) P \gamma(e))$ and $\chi(eP e)$ have the same canonical trace and same unbounded traces so that their Connes-Chern characters $\T$ are equal. Therefore, since $\T$ is injective on $K_0$, their classes in $K_0(A_\theta^\Phi)$ are equal.  By Kishimoto's\footnote{See Remark 2.9 in \ccite{AK} where it is noted that his Theorem 2.1 applies equally to simple AT-algebras of real rank zero, which include the irrational rotation C*-algebra, and its canonical orbifolds under the canonical automorphisms of order 2, 3, 4, and 6. Further, the $K_1$ part of his theorem trivially holds since the $K_1$ group of the Flip orbifold is 0, being an AF-algebra.}Theorem 2.1 \ccite{AK}, the projections $\gamma(e)$ and $e$ are centrally equivalent $\gamma(e) \cequiv e$, so that $\gamma(e) = \beta^{-1}\alpha(e) = vev^*$ for some approximately central unitary $v$ in the Flip orbifold. Applying $\beta,$ we see that $\alpha(e) \cequiv \beta(e)$ are centrally equivalent in $A_\theta^\Phi$. Since $\beta$ is one of the six basic automorphisms mentioned earlier, it follows that $\alpha(e)$ is centrally equivalent to one of the projections in the $S_3$-orbit of $e$:
\begin{equation}
e, \quad \sigma(e), \quad \kappa(e), \quad \kappa^2(e), \quad \sigma\kappa(e), \quad \sigma\kappa^2(e).
\end{equation}
In particular, for any modular automorphism $\alpha$, the AC projection $\alpha(e)$ is centrally equivalent to one of these projections in the Flip orbifold. This proves Theorem \ref{mainthm}.
\medskip

We now prove Theorem \ref{prop}.

\begin{proof} (Of Theorem \ref{prop}.) One can construct AC projections $g$ in the irrational rotation C*-algebra $A_\theta$ orthogonal to their Flip: $g\Phi(g) = 0$ (see \ccite{SWmathscand}). The resulting Flip-invariant projection $e = g + \Phi(g)$ is also approximately central in $A_\theta$. Now fix a class $[P]$ from any given (but fixed) basis for $K_0(A_\theta^\Phi)$ where $P$ is a (smooth) projection in $A_\theta^\Phi$. Then since $\alpha^{-1}(P)$ and $P$ have the same trace, they are unitarily equivalent in the rotation algebra $A_\theta$ (see \ccite{MR}, Corollary 2.5, based on the highly nontrivial results of \ccite{PV} and \ccite{MR1981}). So let $w \in A_\theta$ be a unitary such that $\alpha^{-1}(P) = wPw^*$.
Since $\tau\alpha = \tau,$ we have
\[
\tau \chi(\alpha(e) P \alpha(e) ) = \tau\alpha \chi( e \alpha^{-1}(P) e) =
\tau \chi( e wPw^* e) = \tau (w\chi( e P e)w^*) = \tau (\chi( e P e)) 
\]
since the projections $\chi( e wPw^* e)$ and $w\chi( e P e)w^*$ are close (as $e$ is AC) so therefore have the same trace. 

Next, let $Q = \alpha^{-1}(P)$ and note that from the norm approximation $gQ \Phi(g) \approx 0$ (where $a \approx b$ is short for $\|a-b\|\to 0$), we have
\[
e Q e = (g + \Phi(g)) Q (g + \Phi(g)) \approx gQg + \Phi(g)Q \Phi(g) = gQg + \Phi(gQg)
\]
is a sum of two orthogonal positive elements, each norm-close to their squares (since $g$ is AC), thus giving
\[
\chi(e Q e) \approx \chi(gQg) + \Phi\chi(gQg)
\]
where both sides are $\Phi$-invariant projections. On the right side one has $\phi \chi(g Q g) =  0$ for each $\Phi$-trace $\phi$ (and $\phi \Phi\chi(gQg) = 0$). To see this, we observe that if $f = h + \Phi(h),$ where $h \Phi(h) = 0,$ then $\phi(f) = 0$ for each $\phi$ since $\phi(h) = \phi(hh) = \phi(\Phi(h)h) = 0$. Therefore, $\phi\chi(e Q e) = 0$ and we have
\[
\phi_{jk}\chi(\alpha(e) P \alpha(e)) = \phi_{jk} \alpha \chi(e Q e) = 0
\]
since $\phi_{jk} \alpha$ is also a $\Phi$-trace ($\alpha$ being $\Phi$-invariant). This shows that the projections $\chi(\alpha(e) P \alpha(e))$ and $\chi(e P e)$ have the same canonical trace and both have vanishing unbounded traces, so they yield the same class in $K_0(A_\theta^\Phi),$ and thus, by Kishimoto's Theorem, $\alpha(e)$ and $e$ are centrally equivalent in $A_\theta^\Phi$ (bearing in mind the comment in footnote 2 regarding $K_1=0$).
\end{proof}

%%%%%%%%%%%%%%%%%%%%%%%%%%%
%\medskip
{\Large \section{Smooth Orbifold Automorphisms}}

\medskip

In this section we show how a general smooth automorphism of the Flip orbifold transforms unbounded traces (Theorem \ref{mainB}). Thereby we prove that when it induces the identity on $K_0$ of the Flip orbifold it transforms AC projections to ones that are centrally equivalent to them (Theorem \ref{mainC}).
\medskip

In the proof below we shall make use of the toral automorphisms $\gamma_1, \gamma_2, \gamma_3$ of the rotation algebra, which are also automorphisms of the Flip orbifold (since they commute with the Flip), given by 
\begin{align}\label{gammas}
\gamma_1(U) &= -U, \quad \gamma_1(V)=V	\notag
\\
\gamma_2(U) &= U, \quad \ \ \ \gamma_2(V)=-V
\\
\gamma_3(U) &= -U, \quad \gamma_3(V)=-V		\notag
\end{align}
where $\gamma_3=\gamma_1\gamma_2$. (It's easy to check that these, aside from the identity, are the only toral automorphisms, arising from the usual action of the torus $\mathbb T^2,$ that commute with the Flip.) They transform the unbounded traces according to
\begin{equation}\label{gammaphis}
\phi_{jk} \gamma_1 = (-1)^j \phi_{jk}, \qquad 
\phi_{jk} \gamma_2 = (-1)^k \phi_{jk}, \qquad
\phi_{jk} \gamma_3 = (-1)^{j+k} \phi_{jk}.
\end{equation}

\bigskip

\begin{thm}\label{betaspan}
Let $\beta$ be an automorphism of the smooth Flip orbifold $A_\theta^{\infty,\Phi}$. One has the linear combination
\[
\phi_{jk}\beta = a\phi_{00} + b\phi_{01} +  c\phi_{10} + d \phi_{11} 
\]
where $b,c,d$ are half-integers. Further, $a = 1$ when $jk=00,$ and $a=0$ otherwise.
\end{thm}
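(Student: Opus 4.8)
The plan is to place $\phi_{jk}\beta$ inside the five-dimensional space $HC^0(A_\theta^{\infty,\Phi})$ and then nail down its coordinates using the unit of the orbifold together with the known ranges of the traces on $K_0$. Since $\beta$ is an automorphism of the smooth orbifold, $\phi_{jk}\beta$ is again a trace on $A_\theta^{\infty,\Phi}$, so by the computation $HC^0(A_\theta^{\infty,\Phi}) = \mathbb C\,\vartau \oplus \mathbb C\,\phi_{00}\oplus\mathbb C\,\phi_{01}\oplus\mathbb C\,\phi_{10}\oplus\mathbb C\,\phi_{11}$ (carried out exactly as for the Fourier transform in \ccite{SWChern}) we may write $\phi_{jk}\beta = \lambda\vartau + a\phi_{00}+b\phi_{01}+c\phi_{10}+d\phi_{11}$ for unique complex scalars; the theorem amounts to proving $\lambda = 0$, $a = \updelta_2^j\updelta_2^k$, and $b,c,d\in\tfrac12\mathbb Z$. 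First I would evaluate both sides on the class $[1]\in K_0$: since $\beta$ is unital and $\vartau(1)=\phi_{00}(1)=1$, $\phi_{mn}(1)=0$ for $mn\neq 00$, this yields $\lambda + a = \updelta_2^j\updelta_2^k$, so $a$ is pinned down as soon as $\lambda$ is.

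Next I would show the bounded component vanishes, $\lambda = 0$. The automorphism $\beta$ induces an order automorphism $\beta_*$ of $(K_0(A_\theta^{\infty,\Phi}),K_0^+) = (K_0(A_\theta^\Phi),K_0^+)\cong\mathbb Z^6$, so $\phi_{jk}\beta = \phi_{jk}\circ\beta_*$ has the same range on $K_0^+$ as $\phi_{jk}$, namely $\tfrac12\mathbb Z$, and hence is $\tfrac12\mathbb Z$-valued on all of $K_0$. Let $N=\bigcap_{mn}\ker(\phi_{mn}\vert_{K_0})$; being the common kernel of four homomorphisms $\mathbb Z^6\to\mathbb R$ it has rank $\ge 2$ (and it contains, concretely, the classes $[g+\Phi(g)]$ of the Flip-orthogonal AC projections from the proof of Theorem~\ref{prop}). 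On $N$ every $\phi_{mn}$ vanishes, so $\phi_{jk}\beta\vert_N=\lambda\vartau\vert_N$; since $\T$ is injective on $K_0$, the restriction $\vartau\vert_N$ is injective, so $\vartau(N)$ is a subgroup of $\mathbb R$ isomorphic to $\mathbb Z^{\ge 2}$ and in particular non-cyclic. If $\lambda\neq 0$, then $\lambda\vartau(N)\subseteq\tfrac12\mathbb Z$ would place $\vartau(N)$ inside the cyclic group $\tfrac1{2\lambda}\mathbb Z$, a contradiction. Hence $\lambda=0$, and then $a=\updelta_2^j\updelta_2^k$ by the previous step.

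It remains to establish that $b,c,d$ are half-integers. Now $\phi_{jk}\beta = a\phi_{00}+b\phi_{01}+c\phi_{10}+d\phi_{11}$ with $a\in\{0,1\}$ is $\tfrac12\mathbb Z$-valued on $K_0$; subtracting the term $a\phi_{00}$ (itself $\tfrac12\mathbb Z$-valued) gives $b\phi_{01}(x)+c\phi_{10}(x)+d\phi_{11}(x)\in\tfrac12\mathbb Z$ for every $x\in K_0$. I would then feed in an explicit $\mathbb Z$-basis of $K_0(A_\theta^\Phi)$ whose Connes--Chern characters are recorded in \ccite{SW-JLMS}: on these generators the triples $(\phi_{01}(x),\phi_{10}(x),\phi_{11}(x))$ exhaust enough of $\tfrac12\mathbb Z^3$ (in particular they hit the three integral unit vectors) that the displayed membership forces $b,c,d\in\tfrac12\mathbb Z$.

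The conceptual crux is the vanishing of $\lambda$: excluding a bounded ($\vartau$) component of $\phi_{jk}\beta$ is precisely where the injectivity of $\T$ and the non-cyclicity of the trace range on the common kernel of the $\phi_{mn}$ are indispensable --- without them $\phi_{jk}\beta$ could a priori be a genuine mixture of $\vartau$ and the unbounded traces. The final step is essentially bookkeeping, but it does depend on having the generators of $K_0(A_\theta^\Phi)$ and their Chern characters explicitly in hand.
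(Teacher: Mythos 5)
Your proof is correct, but the way you kill the $\vartau$-component is genuinely different from the paper's. The paper also starts from the five-dimensional $HC^0$ decomposition and the normalization at the unit, but it then evaluates $\phi_{jk}\beta = K\vartau + a\phi_{00}+b\phi_{01}+c\phi_{10}+d\phi_{11}$ on six concrete projections --- the unit, a Flip-invariant projection $f_0$ with $\T(f_0)=(2(3\theta'-1);0,0,0,0)$ supplied by Theorem 1.2 of \ccite{SWmathscand}, and the Powers--Rieffel projection $Q$ of \ccite{SamHouston2018} together with its toral twists $\gamma_1Q,\gamma_2Q,\gamma_3Q$ --- and extracts the relations $12K\theta'-4K=N_1$ and $4K\theta'-2K=\tfrac12 M-2\phi_{jk}(1)$ with integer data; eliminating $K\theta'$ shows $K$ is rational, and then $4K(3\theta'-1)=N_1$ forces $K=0$ by irrationality of $\theta'$. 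You instead observe that $\phi_{jk}\beta$ is $\tfrac12\mathbb Z$-valued on $K_0\cong\mathbb Z^6$, that the common kernel $N$ of the four unbounded traces has rank $\ge 2$ (since the image of $K_0$ lies in $(\tfrac12\mathbb Z)^4$, a free group of rank $4$), and that injectivity of $\T$ makes $\vartau\vert_N$ injective, so $\vartau(N)$ is non-cyclic and cannot sit inside the cyclic group $\tfrac1{2\lambda}\mathbb Z$; hence $\lambda=0$. Your route trades the special projection $f_0$ (and the constraint $0<3(3\theta'-1)<1$ needed to produce it) for the injectivity of the Connes--Chern character (Proposition 3.2 of \ccite{SW-JLMS}), and is arguably more structural: it applies whenever the unbounded traces are $\tfrac12\mathbb Z$-valued and the rank of $K_0$ exceeds the number of unbounded traces by at least two. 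The paper's route is more elementary in that it never invokes injectivity of $\T$ in this theorem. For the half-integrality of $b,c,d$ both arguments ultimately rest on the same input --- classes whose $(\phi_{01},\phi_{10},\phi_{11})$-triples include the three integral unit vectors, e.g. $[Q]+[\gamma_iQ]$ --- so your final bookkeeping step does go through, though you should make explicit that these unit vectors are realized by such sums rather than leaving it at ``exhaust enough of $\tfrac12\mathbb Z^3$.''
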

\begin{proof} Fix $j,k$. From the paragraph just before Corollary \ref{coro}, since $\phi_{jk}\beta$ is a trace map on $A_\theta^{\infty,\Phi},$ it is a linear combination
\begin{equation}\label{phibetacomb}
\phi_{jk}\beta = K \tau + a \phi_{00} + b\phi_{01} +  c\phi_{10} + d \phi_{11} 
\end{equation}
where $K,a,b,c,d$ are complex constants (depending on $j,k$). 

By Lemma 3.2 in \ccite{SamHouston2018}, there are Flip-invariant Powers-Rieffel projections $Q$ with Connes-Chern character $\T(Q) = (\theta'; \tfrac12, \tfrac12, \tfrac12, \tfrac12)$ for a dense set of irrationals $\theta' \in \mathbb (Z + \mathbb Z\theta) \cap (0,\tfrac12)$. Pick such a $Q$ with $\theta'$ in the range $\tfrac13 < \theta' < \tfrac49 < \tfrac12,$ so that $0 < 3(3\theta'-1) < 1$. 

Applying the toral automorphisms $\gamma_j$ to $Q$ gives the following Connes-Chern characters (in view of \eqref{gammaphis})
\begin{align}\label{gamma1Q}
\T(Q) & = (\theta'; \tfrac12, \tfrac12, \tfrac12, \tfrac12)		\notag
\\
\T(\gamma_1Q) &= (\theta'; \tfrac12,  \tfrac12, -\tfrac12, -\tfrac12)
\\
\T(\gamma_2 Q) &= (\theta'; \tfrac12, - \tfrac12, \tfrac12, - \tfrac12)	\notag
\\
\T(\gamma_3 Q) &= (\theta'; \tfrac12, - \tfrac12, - \tfrac12, \tfrac12) 	\notag
\end{align}
By Theorem 1.2 of \ccite{SWmathscand}, which requires the condition $0 < 3(3\theta'-1) < 1,$ there exists a Flip invariant projection $f_0$ such that $\T(f_0) = (2(3\theta'-1); 0,0,0,0)$. Now we evaluate equation \eqref{phibetacomb} at the projections $1, f_0, Q, \gamma_1Q, \gamma_2Q, \gamma_3Q,$ and recall that $\phi_{jk}$ is always a half-integer on projections, we obtain the respective equations
\[
K + a = \phi_{jk}(1) \in \{0,1\}
\]
\[
12K\theta' - 4K = N_1
\]
\[
K\theta' + \tfrac12 a  + \tfrac12 b +  \tfrac12 c + \tfrac12 d = \tfrac12 N_2
\]
\[
K\theta' + \tfrac12 a + \tfrac12 b - \tfrac12 c - \tfrac12 d = \tfrac12 N_3
\]
\[
K\theta' + \tfrac12 a  - \tfrac12 b + \tfrac12 c - \tfrac12 d = \tfrac12 N_4
\]
\[
K\theta' + \tfrac12 a  - \tfrac12 b - \tfrac12 c + \tfrac12 d = \tfrac12 N_5
\]
for some integers $N_1,\dots,N_5$. Adding the last four of these equations gives
\[
4K\theta' + 2a = \tfrac12 M
\]
where $M = N_2 + N_3 + N_4 + N_5$ is an integer. From $a = \phi_{jk}(1) - K$ we get 
\[
4K\theta' - 2K  = \tfrac12 M - 2\phi_{jk}(1).
\]
But since also $12K\theta' - 4K = N_1$ is an integer, after eliminating $K\theta'$ using these two equations, we see that $K$ must be rational. So from $4K(3\theta' - 1) = N_1$ it follows that $K=0$ (or else $\theta'$ would be rational). Therefore $K = N_1 = 0$ and hence also $a=\phi_{jk}(1)$ and $M = 4\phi_{jk}(1)$. Further, the linear equations above involving $a,b,c,d$ become
\begin{align*} 
b +  c +  d &=  N_2 - a
\\
b -  c -  d &=  N_3 - a
\\
-  b +  c -  d &=  N_4 - a
\\
-  b -  c +  d &=  N_5 - a
\end{align*}
which imply that $b,c,d$ are half integers since $a=\phi_{jk}(1)$ is 1 for $jk=00$ and is 0 otherwise. Equation \eqref{phibetacomb} has now become
\begin{equation}
\phi_{jk}\beta = a \phi_{00} + b\phi_{01} +  c\phi_{10} + d \phi_{11} 
\end{equation}
where $a=\phi_{jk}(1)$ and $b,c,d\in \tfrac12\mathbb Z,$ hence the result.
\end{proof}

\medskip

\begin{qst} As noted above, the invariance relation $\phi_{00}\alpha = \phi_{00}$ holds for any modular $\alpha$ as well as for $\gamma_j$'s. Is it true for all smooth automorphisms $\alpha$ of the Flip orbifold? Or for any smooth automorphism of $A_\theta$  commuting with the Flip?
\end{qst}

\medskip

\begin{cor}
A smooth automorphism of the Flip orbifold that induces the identity map on $K_0(A_\theta^{\infty,\Phi}) = \mathbb Z^6$ necessarily induces the identity map on the zeroth cyclic cohomology group $HC^0(A_\theta^{\infty,\Phi}) = \mathbb C^5$.
\end{cor}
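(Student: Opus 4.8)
The plan is to show that precomposition by $\beta$ fixes each of the five functionals $\tau,\phi_{00},\phi_{01},\phi_{10},\phi_{11}$ spanning $HC^0(A_\theta^{\infty,\Phi})$; since these form a basis, this gives that $\beta$ induces the identity on $HC^0$.

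First I would record two formal facts. (i) Each of the five functionals $\phi$ is a trace on $A_\theta^{\infty,\Phi}$ (each $\phi_{jk}$ being a $\Phi$-trace, which on the orbifold means a trace), hence factors through a group morphism $K_0(A_\theta^\Phi)\to\mathbb R$; in particular $\phi(P)$ for a projection $P\in A_\theta^\Phi$ depends only on $[P]$. (ii) For each such $\phi$, the functional $\phi\beta$ is again a trace on $A_\theta^{\infty,\Phi}$ ($\beta$ commuting with the Flip), hence lies in $HC^0(A_\theta^{\infty,\Phi})$ and so is a linear combination of $\tau,\phi_{00},\phi_{01},\phi_{10},\phi_{11}$. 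Now, since $\beta_*=\mathrm{id}$ on $K_0(A_\theta^{\infty,\Phi})=K_0(A_\theta^\Phi)$ by hypothesis, for every projection $P\in A_\theta^\Phi$ we get $(\phi\beta)(P)=\phi(\beta(P))=\phi(P)$ using (i) and $[\beta(P)]=\beta_*[P]=[P]$. Thus $\phi\beta$ and $\phi$ agree on all projections, hence on $K_0$. Subtracting and using (ii), a fixed linear combination of $\tau,\phi_{00},\phi_{01},\phi_{10},\phi_{11}$ vanishes identically on $K_0(A_\theta^\Phi)$. So the whole statement reduces to the claim that these five functionals are $\mathbb R$-linearly independent on $K_0$, equivalently that the Connes--Chern image $\T(K_0)\subset\mathbb R^5$ spans $\mathbb R^5$.

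To verify this last claim I would reuse the explicit projections already present in the proof of Theorem \ref{betaspan}. The identity gives $\T(1)=(1;1,0,0,0)$; the Flip-invariant Powers--Rieffel projection $Q$ has $\T(Q)=(\theta';\tfrac12,\tfrac12,\tfrac12,\tfrac12)$, and by \eqref{gamma1Q} the differences $\T(Q)-\T(\gamma_iQ)$ are $(0;0,0,1,1),\ (0;0,1,0,1),\ (0;0,1,1,0)$, which span the $3$-dimensional coordinate subspace in the $\phi_{01},\phi_{10},\phi_{11}$ directions (the relevant $3\times3$ matrix having determinant $2$); the projection $f_0$ with $\T(f_0)=(2(3\theta'-1);0,0,0,0)$ and $2(3\theta'-1)\neq0$ adds the $\tau$ direction; and $\T(Q)$ then supplies the remaining $\phi_{00}$ direction. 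Hence $\T(K_0)$ spans $\mathbb R^5$, the five functionals are independent on $K_0$, and therefore $\phi\beta=\phi$ for each of the five, giving the corollary.

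The one genuine obstacle is the linear-independence-on-$K_0$ step: being a $K_0$-invariant pins down a trace only on $K_0$, not on all of $A_\theta^{\infty,\Phi}$, so one needs $K_0(A_\theta^\Phi)$ to be rich enough to separate the $5$-dimensional space $HC^0$. This richness is exactly what the projections of \ccite{SamHouston2018} and \ccite{SWmathscand} together with the toral automorphisms $\gamma_j$ supply — and since this computation is already carried out inside the proof of Theorem \ref{betaspan}, the corollary follows with essentially no extra work beyond that bookkeeping.
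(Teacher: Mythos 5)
Your proof is correct, and it reaches the conclusion by a route that is organized differently from the paper's, even though it draws on the same stock of ingredients (the fact that $HC^0(A_\theta^{\infty,\Phi})$ is spanned by $\tau$ and the four $\phi_{jk}$, and the explicit projections $Q$, $\gamma_jQ$ of \ccite{SamHouston2018} and $f_0$ of \ccite{SWmathscand}). The paper first proves Theorem \ref{betaspan}, which for an \emph{arbitrary} smooth $\beta$ eliminates the $\tau$-component of $\phi_{jk}\beta$ by an irrationality argument (the coefficient $K$ satisfies $4K(3\theta'-1)\in\mathbb Z$ together with a second rational constraint, forcing $K=0$) and pins the remaining coefficients to half-integers; the corollary is then deduced by evaluating the resulting $4\times4$ matrix $M$ on $Q,\gamma_1Q,\gamma_2Q,\gamma_3Q$ and inverting the sign matrix of determinant $-16$. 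You instead work directly with the difference $\phi\beta-\phi$, observe it is a trace vanishing on $K_0$, and show $\T(K_0)$ spans $\mathbb R^5$; this kills all five coefficients simultaneously, and in particular the $\tau$-coefficient dies for free because $\T(f_0)=(2(3\theta'-1);0,0,0,0)$ is supported purely in the $\tau$-direction and $\theta'>\tfrac13$ --- no irrationality dichotomy is needed. Your version is thus more self-contained and slightly more elementary \emph{for the corollary alone}, at the cost of not yielding the stronger quantitative information of Theorem \ref{betaspan} (half-integrality, the representation into $\mathrm{SL}_\pm(3,\mathbb R)$), which the paper needs elsewhere and therefore gets to reuse here at no extra cost. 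Two points you handle correctly but should keep explicit: the passage from $[\beta(P)]=[P]$ to $\phi(\beta(P))=\phi(P)$ uses that the unbounded traces descend to $K_0(A_\theta^{\infty,\Phi})=K_0(A_\theta^\Phi)$ (holomorphic functional calculus), and the finite-dimensionality of the trace space is what lets you avoid the trap flagged in the paper's footnote, namely that one cannot argue by norm-density of the span of projections since the $\phi_{jk}$ are not norm-continuous.
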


\noindent(Automorphisms homotopic to the identity induce the identity on $K_0$.)

\begin{proof}
Let $\beta$ denote a smooth automorphism for which, by hypothesis, $[\beta(P)] = [P]$ for each projection $P$. In particular, $\phi_{jk}\beta(P) = \phi_{jk}(P)$ for each projection $P,$ and we want to show that $\phi_{jk}\beta = \phi_{jk}$ as maps on the orbifold $A_\theta^{\infty,\Phi}$.\footnote{Although the linear span of projections is norm dense we cannot conclude $\phi_{jk}\beta = \phi_{jk}$ from this since $\phi_{jk}$ are not norm continuous.} From Theorem \ref{betaspan} we have 
\[
\begin{bmatrix} \phi_{00}\beta \\ \phi_{01}\beta \\ \phi_{10}\beta \\ \phi_{11}\beta
\end{bmatrix}
= M \begin{bmatrix} \phi_{00} \\ \phi_{01} \\ \phi_{10} \\ \phi_{11} \end{bmatrix} 
\]
for some $4\times4$ matrix $M$ (with half-integer entries). Evaluating this at the projections $P$ mentioned in \eqref{gamma1Q}, and using $\phi_{jk}\beta(P) = \phi_{jk}(P),$ we get
\[
\begin{bmatrix} \phi_{00}(P) \\ \phi_{01}(P) \\ \phi_{10}(P) \\ \phi_{11}(P)
\end{bmatrix}
= M \begin{bmatrix} \phi_{00}(P) \\ \phi_{01}(P) \\ \phi_{10}(P) \\ \phi_{11}(P) \end{bmatrix}. 
\]
Stacking the left and right sides for the four projections $P = Q, \gamma_1Q, \gamma_2Q, \gamma_3Q$ (and multiplying by a factor of 2) we get
\[
\begin{bmatrix} 1 & 1 & 1 & 1 \\ 1 & 1 & -1 & -1 \\ 1 & -1 & 1 & -1 \\ 1 & -1 & -1 & 1 
\end{bmatrix}
= M \begin{bmatrix} 1 & 1 & 1 & 1 \\ 1 & 1 & -1 & -1 \\ 1 & -1 & 1 & -1 \\ 1 & -1 & -1 & 1 
\end{bmatrix}.
\]
The matrix appearing on the left and right side has determinant $-16$, so is invertible, hence $M$ is the identity matrix, showing that $\phi_{jk}\beta = \phi_{jk}$ as maps on the smooth orbifold.
\end{proof}

\medskip

This, together with the same proof argument used in Section 2, gives us the following result.

\begin{thm}
Let $\beta$ be a smooth automorphism of the Flip orbifold that induces the identity on $K_0(A_\theta^{\infty,\Phi})$. Then $\beta(e) \cequiv e$ for any approximately central projection $e$ in $A_\theta^{\infty,\Phi}$. 
\end{thm}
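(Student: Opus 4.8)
The plan is to replay the argument from Section 2 (the proof of Theorem~\ref{mainthm}), but with the symmetry-group reduction step omitted, since the hypothesis that $\beta$ induces the identity on $K_0(A_\theta^{\infty,\Phi})$ already does the work that choosing $\beta\in S_3$ did in that proof. First I would invoke the Corollary just proved: a smooth automorphism $\beta$ inducing the identity on $K_0(A_\theta^{\Phi})=\mathbb Z^6$ necessarily satisfies $\phi_{jk}\beta=\phi_{jk}$ for all four unbounded $\Phi$-traces (and $\tau\beta=\tau$ automatically, $\tau$ being the unique tracial state). So $\beta$ plays exactly the role of the automorphism $\gamma=\beta^{-1}\alpha$ in the earlier proof: it fixes every component of the Connes--Chern character $\T$.

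Next I would fix a projection $P\in A_\theta^{\infty,\Phi}$ representing a class $[P]$ in a basis of $K_0(A_\theta^{\Phi})$, and form the cutdown $\chi(\beta(e)P\beta(e))$, which is a genuine projection because $e$ is approximately central. For each $\phi\in\{\tau,\phi_{00},\phi_{01},\phi_{10},\phi_{11}\}$ one computes
\[
\phi\,\chi(\beta(e)P\beta(e)) \;=\; \phi\beta\,\chi(e\,\beta^{-1}(P)\,e)\;=\;\phi\,\chi(e\,\beta^{-1}(P)\,e),
\]
using $\phi\beta=\phi$. Since $[\beta^{-1}(P)]=[P]$ in $K_0(A_\theta^{\Phi})$ (because $\beta_*=\mathrm{id}$), these are unitarily equivalent via a Flip-invariant unitary $u$ by \ccite{SW-JLMS} (Theorem 5.3 / Corollary 5.6), so $\beta^{-1}(P)=uPu^*$, and then the trace property of $\phi$ together with $\Phi$-invariance of $u$ gives $\phi\,\chi(e\,\beta^{-1}(P)\,e)=\phi(u\chi(ePe)u^*)=\phi\,\chi(ePe)$. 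Hence the cutdowns $\chi(\beta(e)P\beta(e))$ and $\chi(ePe)$ have identical Connes--Chern characters; by the injectivity of $\T$ on $K_0(A_\theta^{\Phi})$ (\ccite{SW-JLMS}, Proposition 3.2) they represent the same $K_0$ class. Running this over all basis classes $[P]$, and then applying Kishimoto's Theorem~2.1 \ccite{AK} (valid here, as explained in footnote~2, since the Flip orbifold is a simple AT-algebra of real rank zero with $K_1=0$), we conclude $\beta(e)\cequiv e$ in $A_\theta^{\Phi}$.

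The only genuinely delicate point is making sure the hypotheses of the cited results line up: Kishimoto's theorem requires that the two AC projections have the same class in $K_0$ \emph{and} that the relevant $K_1$ obstruction vanishes, which is exactly the content of footnote~2; and the unitary-equivalence step requires the target algebra to be the Flip orbifold (not merely $A_\theta$), which is supplied by the Flip-invariant version of the classification of projections in \ccite{SW-JLMS}. Apart from that, every step is a transcription of the Section~2 argument, so I do not expect any new obstacle — the Corollary on $HC^0$ has already removed the one substantive hurdle, namely controlling the action of $\beta$ on the discontinuous traces.
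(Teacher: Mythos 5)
Your proposal is correct and follows essentially the same route as the paper, which simply states that the theorem follows from the Corollary on $HC^0$ ``together with the same proof argument used in Section 2''; you have merely written out explicitly the cutdown/Connes--Chern/Kishimoto argument that the paper invokes by reference. No gaps.
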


\smallskip

\begin{rmk}
The smooth automorphism $\gamma_1$ (see \eqref{gammas}) is homotopic to the identity on the rotation algebra $A_\theta,$ but it is not homotopic to the identity on the Flip orbifold since $\gamma_1$ does not induce the identity map on $K_0(A_\theta^\Phi)$ (which is clear from \eqref{gamma1Q}). To show that $\gamma_1(e)$ and $e$ are not centrally equivalent for some AC projection $e$ in the Flip orbifold is not trivial, but can be shown from \ccite{SWb} for an AC Powers-Rieffel projection $e$  (where the same is also shown to occur for the Fourier and Cubic transforms on the Flip orbifold).
\end{rmk}

\begin{rmk}
From Theorem \ref{betaspan}, one obtains a group representation 
\[
M: \text{Aut}(A_\theta^{\infty,\Phi}) \to \text{SL}_{\pm}(3,\mathbb R)
\]
into the modular subgroup $\text{SL}_{\pm}(3,\mathbb R)$ of $\text{GL}(3,\mathbb R)$ consisting of matrices with determinant $\pm1,$ such that the $3\times3$ invertible matrix $M(\beta)$ has half-integer entries for each $\beta,$ defined by noting that any given smooth automorphism $\beta$ (of the Flip orbifold) leaves the subspace spanned by $\phi_{01}, \phi_{10}, \phi_{11}$ invariant.  (In fact, all its powers $M(\beta)^n = M(\beta^n)$ also have half-integer entries for each $n\in \mathbb Z,$ from which it is easy to see that $\det\, M(\beta) = \pm1$.) Do the entries all have to be integers? We do not have an example where $M(\beta)$ has non-integer entries (for smooth $\beta$). For instance, for automorphisms $\alpha_1, \alpha_2$ mentioned in Lemma \ref{lemma} (for even $a,d,$ and for even $a$ and odd $d,$ respectively), one has cyclic permutation matrices
\[
M(\alpha_1) = \begin{bmatrix}
0 & 1 & 0 \\
1 & 0 & 0 \\
0 & 0 & 1 
\end{bmatrix}, \qquad 
M(\alpha_2) = \begin{bmatrix}
0 & 1 & 0 \\
0 & 0 & 1 \\
1 & 0& 0 
\end{bmatrix}.
\]
(The first one has determinant $-1,$ so $M(\beta)$ can have negative determinant.) 
For $\gamma_1$ it is easy to see that its matrix is $M(\gamma_1)= $ diag$(1,-1,-1)$. It would be interesting to know the homomorphic image of $M$ (which contains the subgroup generated by the permutation matrices and the indicated diagonals).
\end{rmk}

\medskip

\end{document}